\title{On $\qit$-factorial terminalizations of nilpotent orbits}
\author{Baohua Fu }
\newtheorem{Thm}{Theorem}[section]
\newtheorem{Lem}[Thm]{Lemma}
\newtheorem{Prop}[Thm]{Proposition}
\newtheorem{Cor}[Thm]{Corollary}
\newtheorem{Conj}{Conjecture}
\newtheorem{Rque}[Thm]{Remark}
\def\cit{{\mathbb C}}
\def\qit{{\mathbb Q}}
\def\zit{{\mathbb Z}}
\def\0{{\mathcal O}}
\def\g{{\mathfrak g}}
\def\h{{\mathfrak h}}
\def\p{{\mathfrak p}}
\def\n{{\mathfrak n}}
\def\X{{\mathcal X}}
\def\mtp{\mathop{\rm mtp}\nolimits}
\def\Ind{\mathop{\rm Ind}\nolimits}
\def\Im{\mathop{\rm Im}\nolimits}
\def\Pic{\mathop{\rm Pic}\nolimits}
\def\Exc{\mathop{\rm Exc}\nolimits}
\def\codim{\mathop{\rm codim}\nolimits}
\def\Kr{\mathop{\rm Kr}\nolimits}
\def\q{{\mathfrak q}}
\def\b{{\mathfrak b}}
\begin{document}
\maketitle
\begin{center}
Abstract
\end{center}
In a recent preprint, Y. Namikawa proposed a conjecture on
$\qit$-factorial terminalizations  of nilpotent orbit closures and
he proved his conjecture for classical simple Lie algebras. In
this paper, we prove his conjecture for exceptional simple Lie
algebras. For the birational geometry, contrary to the classical
case, two new types of Mukai flops appear. We also give
classifications of nilpotent orbit closures in  simple Lie
algebras whose normalization has only $\qit$-factorial or terminal singularities. \\

\begin{center}
R\'esum\'e
\end{center}
Dans un papier r\'ecent, Y. Namikawa a propos\'e une conjecture sur les terminalizations
$\qit$-factorielles des cl\^otures d'orbites nilpotentes et il l'a demontr\'e pour les
alg\`ebres de Lie simples classiques. Dans ce papier, on d\'emontre sa conjecture pour les
alg\`ebres de Lie simples exceptionnels. Pour la g\'eom\'etrie birationelle,  contrairement au cas
classique, deux  nouveaux types de flops de Mukai apparaissent. On  obtient aussi des classifications des
cl\^otures d'orbites nilpotentes dans une al\`ebre de Lie simple dont la normalization admet
que de singularit\'es $\qit$-factorielles ou terminales.   \\

{\em Keywords:} nilpotent orbits, minimal models, birational geometry.

\section{Introduction}

Let $\g$ be a complex simple Lie algebra and $G$ its adjoint
group. For a parabolic subgroup $Q \subsetneq G$, we denote by
$\q$ its Lie algebra and $\q=n(\q)+l(\q)$ its Levi decomposition.
For a nilpotent orbit $\0_t$ in $l(\q)$, Lusztig and Spaltenstein
\cite{L-S} showed that $G \cdot (n(\q)+\bar{\0_t})$ is a nilpotent
orbit closure, say $\bar{\0}$, which depends only on the $G$-orbit
of the pair $(l(\q), \0_t)$. The variety $n(\q)+\bar{\0_t}$ is
$Q$-invariant and the surjective map
$$
\pi: G \times^Q(n(\q)+\bar{\0_t}) \to \bar{\0}
$$
is  generically finite and projective, which will be called {\em a
generalized Springer map}. When $\0_t=0$ and $\pi$ is birational,
we call $\pi$ a {\em Springer resolution}.  An {\em induced orbit}
is a nilpotent orbit whose closure is the image of a generalized
Springer map. A nilpotent orbit is called {\em rigid} if it is not
induced.

Recall that for a variety $X$ with rational Gorenstein
singularities, a {\em $\qit$-factorial terminalization} of $X$ is
a birational projective morphism $p: Y \to X$ such that $Y$ has
only $\qit$-factorial terminal singularities and $p^*K_X=K_Y$.
When $Y$ is furthermore smooth, we call $p$ a {\em crepant
resolution}. In \cite{F1}, the author proved that for nilpotent
orbit closures in a semi-simple Lie algebra, crepant resolutions
are Springer resolutions. In a recent preprint \cite{Nam}, Y.
Namikawa proposed the following conjecture on $\qit$-factorial
terminalizations of nilpotent orbit closures.

\begin{Conj}\label{conj}
 Let $\0$ be a nilpotent orbit in a complex simple Lie
algebra $\g$ and $\tilde{\0}$ the normalization of its closure
$\bar{\0}$. Then one of the following holds:

(1) $\tilde{\0}$ is $\qit$-factorial terminal;

(2)  every $\qit$-factorial terminalization of $\tilde{\0}$ is
given by a generalized Springer map.  Furthermore, two such
terminalizations are connected by Mukai flops (cf. \cite{Nam2}, p.
91).
\end{Conj}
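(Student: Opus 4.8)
\medskip

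\noindent\textbf{Outline of the proof.}
The plan is to prove Conjecture~\ref{conj} by refining Lusztig--Spaltenstein induction to a birational setting. Call a generalized Springer map \emph{birational} if it is birational onto its image, and call a nilpotent orbit $\0_0$ in a Levi subalgebra $\l$ \emph{birationally rigid} if $\bar{\0_0}$ is not the image of any birational generalized Springer map attached to a proper parabolic subalgebra of $\l$. Following the framework of \cite{Nam}, the first step is that every nilpotent orbit $\0\subset\g$ is the image of a birational generalized Springer map from some pair $(\l,\0_0)$ with $\0_0$ birationally rigid, and that such a pair is unique up to $G$-conjugacy. Granting this, the functor $G\times^Q(-)$ turns the two alternatives of the conjecture into the single dichotomy ``$\0$ is birationally rigid'' versus ``$\0$ is properly birationally induced'', so it suffices to prove: (a) the normalization $\tilde{\0}$ of a birationally rigid orbit closure is $\qit$-factorial terminal; and (b) when $\0=\Ind_\l^\g(\0_0)$ with $\l\subsetneq\g$ and $\0_0$ birationally rigid, every $\qit$-factorial terminalization of $\tilde{\0}$ is given by a generalized Springer map and any two of them are connected by Mukai flops.

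For (a), fix a birationally rigid $\0$. As $\bar{\0}$ carries symplectic---hence canonical Gorenstein---singularities, $\tilde{\0}$ is terminal exactly when no crepant exceptional divisor lies over it, and a discrepancy computation shows that such a divisor can only appear over the codimension-two part of $\tilde{\0}\setminus\0$, whose transverse slices are, by the work of Kraft and Procesi, Kleinian singularities or their known non-normal analogues in types $B$, $C$, $D$. A crepant divisor there, together with \cite{F1} applied to the resulting relative crepant contraction, would exhibit $\0$ as the image of a nontrivial birational generalized Springer map, contradicting birational rigidity; hence $\tilde{\0}$ is terminal. For $\qit$-factoriality I would compare $\Pic(\tilde{\0})\otimes\qit$ with $H^2$ of a terminalization via Namikawa's criterion: birational rigidity leaves no exceptional curve class to be contracted, so the two agree and $\tilde{\0}$ is $\qit$-factorial. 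The delicate point is the non-normal minimal degenerations, which must be inspected by hand to confirm that they admit no crepant modification.

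For (b), let $\pi\colon G\times^Q(n(\q)+\bar{\0_0})\to\bar{\0}$ be the birational generalized Springer map realizing $\0=\Ind_\l^\g(\0_0)$. By part (a) applied inside $\l$, the normalization of $n(\q)+\bar{\0_0}$ is $\qit$-factorial terminal; hence $G\times^Q$ of it is again $\qit$-factorial terminal and maps properly and birationally onto $\tilde{\0}$, giving a $\qit$-factorial terminalization of $\tilde{\0}$ realized by a generalized Springer map. In particular alternative (1) of the conjecture fails. To see these models are exhaustive I would invoke Namikawa's description in \cite{Nam} of the birational geometry of $\tilde{\0}$: a $\qit$-factorial terminalization has movable cone equal to a fundamental domain for the Namikawa Weyl group acting on $H^2$, its chambers are the ample cones of the various $\qit$-factorial terminalizations, and codimension-one walls correspond to flops. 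The parabolics $Q$ whose Levi is $G$-conjugate to $\l$ give precisely the translates of one chamber under the relative Weyl group $N_G(L)/L$, hence exhaust all chambers, so every $\qit$-factorial terminalization is a generalized Springer model. Finally, a local computation along the codimension-two stratum responsible for a given wall identifies the corresponding wall-crossing with a Mukai flop of a $\pit^k$-bundle, as in \cite{Nam2}.

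To make all of this effective I would run through the classification of $\g$. In the classical types nilpotent orbits are parametrized by partitions, birational induction becomes an explicit combinatorial collapse of partitions, birational rigidity turns into a partition criterion, and the Mukai flops are the visible $\pit^k$-flops between the resulting $T^*(G/Q)$-type bundles. In the exceptional types I would proceed orbit by orbit along the Bala--Carter list, using tables of induced orbits, Picard numbers and Namikawa Weyl groups to verify the dichotomy. The main obstacle throughout is part (a): proving that birational rigidity genuinely forces $\qit$-factorial terminality, which requires controlling the non-normal minimal singularities in the classical cases and excluding exotic crepant modifications in the exceptional ones; this is where a purely conceptual argument seems out of reach and a classification-assisted verification appears to be needed.
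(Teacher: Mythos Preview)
Your overall framework---the birational-rigidity dichotomy, with Namikawa's movable-cone description handling the properly induced case---is the right one and is indeed what the paper follows in extending \cite{Nam} to exceptional types. The genuine gap is in your proposed mechanism for part~(a). You argue that a crepant exceptional divisor over $\tilde{\0}$, ``together with \cite{F1} applied to the resulting relative crepant contraction, would exhibit $\0$ as the image of a nontrivial birational generalized Springer map, contradicting birational rigidity.'' But \cite{F1} only shows that a \emph{smooth} crepant model of a nilpotent orbit closure is a Springer map $T^*(G/P)\to\bar{\0}$; it says nothing about partial crepant contractions, and there is no a~priori reason an extremal contraction of a $\qit$-factorial terminalization of $\tilde{\0}$ must be of generalized Springer type. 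So ``not terminal $\Rightarrow$ not birationally rigid'' is unproved. Your appeal to Kraft--Procesi for the codimension-two transverse slices is likewise valid only in classical types; the exceptional minimal degenerations were not available as input.

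The paper therefore handles part~(a) by entirely different, and much more explicit, means. Terminality for birationally rigid orbits is established directly: for most one checks $\codim(\bar{\0}\setminus\0)\geq 4$, and for six residual orbits one studies the Jacobson--Morozov resolution $G\times^P\n_2\to\bar{\0}$, either proving that $K_{G/P}^{-1}\otimes\det(G\times^P\n_2)$ is ample on $G/P$, or locating the degeneracy locus of Panyushev's $2$-form $\Omega$ on explicit exceptional divisors found with {\bf GAP4}. The $\qit$-factoriality is settled not via a cone comparison but through finiteness of $\chi((G_x)^\circ)$ and, where that fails, through exceptional-divisor counts computed as Borho--MacPherson multiplicities $\mtp(\rho_{(x',1)},\Ind_{W(L)}^W\rho_{(t,1)})$ read off from Alvis's induce/restrict tables; these same multiplicity calculations determine which generalized Springer maps are birational (Proposition~\ref{degree} and Proposition~\ref{key}), replacing your partition combinatorics in exceptional type. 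Finally, the wall-crossings that arise are \emph{not} the Mukai flops of \cite{Nam2}: two new flops, called $E_{6,I}^{I}$ and $E_{6,I}^{II}$, appear from the two conjugacy classes of maximal parabolics of $E_6$ with Levi $D_5$, and a local $\pit^k$-bundle computation does not suffice to identify them.
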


In \cite{Nam}, Y. Namikawa proved his conjecture in the case when
$\g$ is classical. In this paper, we shall prove Conjecture
\ref{conj}  for $\g$ exceptional(Theorem \ref{rigid} and Theorem
\ref{thm}), while for the birational geometry, two new types of
Mukai flops appear (Section \ref{Mukai}). Two interesting results
are also obtained: one is the classification of nilpotent orbit
closures with $\qit$-factorial normalization $\tilde{\0}$
(Proposition \ref{Fact}) and the other is the classification of
nilpotent orbit closures with terminal $\tilde{\0}$ (Proposition
\ref{terminal}). \vspace{0.3 cm}

Here is the organization of this paper. After recalling results
from \cite{B-M}, we first give a classification of induced orbits
which are images of birational generalized Springer maps
(Proposition \ref{key}). Using this result, we completely settle
the problem of $\qit$-factoriality of the normalization of a
nilpotent orbit closure in exceptional Lie algebras (Proposition
\ref{Fact}), which shows the surprising result that only in $E_6$,
$\tilde{\0}$ could be non-$\qit$-factorial. We then prove that for
rigid orbits the normalization of its closure is $\qit$-factorial
and terminal (Theorem \ref{rigid}). For induced orbits whose
closure does not admit a crepant resolution, we shall first prove
that except four orbits (which have $\qit$-factorial terminal
normalizations), there exists a generalized Springer map which
gives a $\qit$-factorial terminalization of $\tilde{\0}$. For the
birational geometry,  unlike the classical case proven by Y.
Namikawa, two new types of flops appear here, which we call  Mukai
flops of type $E_{6,I}^I$ and $E_{6, I}^{II}$ (cf. Section
\ref{Mukai}).   Then we apply arguments  in\cite{Nam} to conclude our proof.
 An interesting corollary
is a classification of nilpotent orbits in a simple exceptional
Lie algebra such that $\tilde{\0}$ has only terminal singularities
(Proposition \ref{terminal}). In the appendix, we shall classify
nilpotent orbits in a simple classical Lie algebra whose normalization is
$\qit$-factorial. In \cite{F1}, we treated this problem, but the argument in the proof
of Theorem 2.6 (\cite{F1})  is not correct.
\vspace{0.3 cm}

{\em Acknowledgements:} The author would like to thank Y. Namikawa
for his corrections and helpful correspondences to this paper. I
thank W. de Graaf for his help on computing in {\bf GAP4}. I am
grateful to M. Brion, S. Goodwin, H. Kraft, G. R\"ohrle, E.
Sommers for their helpful correspondences.

I'm very grateful to Travis Schedler for pointing out a mistake in the published version of Proposition \ref{bcd} and for suggestions on Proposition \ref{p.Pic}.
\section{Preliminaries}

\quad \ In this section, we shall recall some results from
\cite{B-M}.
 Let $W$ be the Weyl group of $G$. The Springer
correspondence (\cite{Sp2}) assigns to each irreducible $W$-module
a unique  pair $(\0, \phi)$ consisting of a nilpotent orbit $\0$
in $\g$ and an irreducible representation $\phi$ of the component
group $A(\0):= G_x/(G_x)^\circ$ of $\0$, where  $x$ is any point
in $\0$ and $(G_x)^\circ$ is the identity component of $G_x$.
 The corresponding irreducible $W$-module will be denoted by  $\rho_{(x, \phi)}.$
 This correspondence is not surjective
onto the set of all pairs $(\0, \phi)$. A pair will be called {\em
relevant} if it corresponds to an irreducible $W$-module, then the
Springer correspondence establishes a bijection between
irreducible $W$-modules and relevant pairs in $\g$. For $G$
exceptional, the Springer correspondence has been completely
worked out in \cite{Sp1} for $G_2$, in \cite{Sho} for $F_4$ and in
\cite{A-L} for $E_n (n=6,7,8)$. We will use the tables in
\cite{Car} (Section 13.3).

Consider a parabolic sub-group $Q$ in $G$. Let $L$ be a Levi
sub-group of $Q$ and $T$ a maximal torus in $L$.  The Weyl group
of $L$ is $W(L) := N_L(T)/T$, where $N_L(T)$ is the normalizer of
$T$ in $L$. It is a sub-group of the Weyl group $W$ of $G$. For a
representation $\rho$ of $W(L)$, we denote by
$\Ind_{W(L)}^W(\rho)$ the induced representation of $\rho$ to $W$.

\begin{Prop}[\cite{B-M}, proof of Corollary 3.9]\label{degree}
Let $\pi: G \times^Q (n(\q)+\bar{\0_t}) \to \bar{\0}_x$ be the
generalized Springer map associated to the parabolic sub-group $Q$
and the nilpotent orbit $\0_t$. Then
$$\deg(\pi) = \sum_{\phi} \mtp(\rho_{(x, \phi)},
\Ind_{W(L)}^W(\rho_{(t, 1)})) \deg \phi,$$ where the sum is over
all irreducible representations $\phi$ of $A(\0_x)$ such that
$(\0_x, \phi)$ is a relevant pair, $\mtp(\rho_{(x, \phi)},
\Ind_{W(L)}^W(\rho_{(t, 1)}))$
 is the multiplicity
 of $ \rho_{(x, \phi)} $
in $ \Ind_{W(L)}^W(\rho_{(t, 1)})$ and $\deg \phi$ is the
dimension of
 the irreducible representation $\phi$.
\end{Prop}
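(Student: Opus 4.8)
The plan is to rewrite $\deg(\pi)$ as a generic stalk rank and then to evaluate that rank via the decomposition theorem, essentially reconstructing Borho--MacPherson's proof of their Corollary~3.9.

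Since $\pi$ is projective and generically finite, and we are in characteristic $0$, a general fibre of $\pi$ over a point $y\in\0_x$ is a disjoint union of $\deg(\pi)$ reduced points; as $\pi^{-1}(y)$ is then $0$-dimensional, $R^i\pi_*\underline{\cit}$ vanishes near $y$ for $i>0$ and $\deg(\pi)=\dim_\cit(R^0\pi_*\underline{\cit})_y$. First I would observe, using Lusztig--Spaltenstein induction, that $\pi^{-1}(\0_x)$ is contained in the \emph{smooth} open subset $G\times^Q(n(\q)+\0_t)$ of the source: the complementary locus $G\times^Q\bigl(n(\q)+(\bar{\0_t}\setminus\0_t)\bigr)$ maps onto a union of induced orbit closures attached to the strictly smaller orbits in $\bar{\0_t}$, all of dimension $<\dim\0_x$, hence onto a proper $G$-stable closed subset of $\bar{\0}_x$ not meeting $\0_x$. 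Choosing a $G$-equivariant resolution $\sigma\colon Z\to G\times^Q(n(\q)+\bar{\0_t})$ which is an isomorphism over $G\times^Q(n(\q)+\0_t)$ and setting $\tilde\pi=\pi\circ\sigma$, we get a proper $G$-equivariant morphism $\tilde\pi\colon Z\to\bar{\0}_x$ with smooth source for which $\dim_\cit(R^0\tilde\pi_*\underline{\cit})_y=\deg(\pi)$ for $y\in\0_x$. By the decomposition theorem, $\tilde\pi_*\bigl(\underline{\cit}_Z[\dim Z]\bigr)$ is a direct sum of shifted intersection cohomology complexes $IC_{\overline{\0'}}(\phi)$, where $\0'$ runs over $G$-orbits in $\bar{\0}_x$ and $\phi$ over irreducible $G$-equivariant local systems on $\0'$, i.e.\ irreducible representations of $A(\0')$.

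Next I would bring in the Borho--MacPherson description of this decomposition in terms of the Springer correspondence. The Weyl group $W$ acts on $\tilde\pi_*IC$, extending the Springer action through the partial resolution attached to $Q$, and the twist by $\0_t$ has the effect of inducing: the $W$-module carried by the fibres of $\tilde\pi$ over $\0_x$ is $\Ind_{W(L)}^W\rho_{(t,1)}$, and more precisely the multiplicity of $IC_{\overline{\0'}}(\phi)$ in $\tilde\pi_*IC$ equals $\mtp\bigl(\rho_{(x',\phi)},\Ind_{W(L)}^W\rho_{(t,1)}\bigr)$ when $(\0',\phi)$ is a relevant pair and is $0$ otherwise. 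I expect this to be the main obstacle: it requires the compatibility of Lusztig--Spaltenstein induction of nilpotent orbits with induction of Springer representations --- the technical core of \cite{B-M} --- and, in our situation, keeping track of the $\0_t$-twist and of the auxiliary resolution $Z$ (rather than only the classical case $\0_t=0$).

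Finally I would restrict to the open orbit $\0_x$. A summand $IC_{\overline{\0'}}(\phi)$, shifted by any amount, has nonzero stalk at a general $y\in\0_x$ precisely when $\0'=\0_x$, in which case that stalk is concentrated in a single degree and equals the fibre of the local system $\phi$, of dimension $\deg\phi$; smaller strata and nontrivial perverse shifts contribute nothing there. Summing over the relevant pairs $(\0_x,\phi)$ and using the multiplicity computation of the previous step gives
$$\deg(\pi)=\dim_\cit\bigl(R^0\pi_*\underline{\cit}\bigr)_y=\sum_\phi\mtp\bigl(\rho_{(x,\phi)},\Ind_{W(L)}^W\rho_{(t,1)}\bigr)\,\deg\phi,$$
as claimed.
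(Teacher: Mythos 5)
Your argument is correct and is essentially the proof of the cited source: the paper itself offers no proof of this proposition beyond the reference to \cite{B-M}, and what you reconstruct (degree as generic stalk rank of $R^0\pi_*\underline{\cit}$, reduction to the smooth locus $G\times^Q(n(\q)+\0_t)$ via the Lusztig--Spaltenstein dimension count, the decomposition theorem, and the Borho--MacPherson identification of the multiplicities over the open orbit with $\mtp(\rho_{(x,\phi)},\Ind_{W(L)}^W\rho_{(t,1)})$) is exactly how Corollary 3.9 of \cite{B-M} is obtained from their main theorem. The one input you correctly isolate as the technical core --- the compatibility of orbit induction with induction of Springer representations --- is precisely the content of that main theorem, so taking it as given is appropriate here.
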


The multiplicity $ \mtp(\rho_{(x, \phi)}, \Ind_{W_0}^W(\rho))$ has
been worked out in \cite{Alv}, for any irreducible representation
$\rho$ of any maximal parabolic sub-group  $W_0$ of $W$. Note that
$ \Ind_{W(L)}^W(\rho) = \Ind_{W_0}^W (\Ind_{W(L)}^{W_0}(\rho))$
for any  sub-group $W_0$ of $W$ containing $W(L)$ and
 $\Ind_{W(L)}^{W_0}(\rho) $ can be determined by the Littlewood-Richardson rules
when $W_0$ is classical and by \cite{Alv} when $W_0$ is
exceptional.

By the remark in section 3.8 \cite{B-M}, $\mtp(\rho_{(x, 1)},
\Ind_{W(L)}^W(\rho_{(t, 1)})) = 1$,  which gives the following
useful corollary.
\begin{Cor}\label{A=1}
If $\0$ is an induced orbit with $A(\0)=\{1\}$, then every
generalized Springer map onto $\0$ is birational.
\end{Cor}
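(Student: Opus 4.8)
The plan is to read the degree off Proposition \ref{degree} and observe that the hypothesis $A(\0)=\{1\}$ collapses the sum defining $\deg(\pi)$ to a single term. Fix a parabolic subgroup $Q\subsetneq G$ with Levi subgroup $L$ and a nilpotent orbit $\0_t$ in $l(\q)$ such that the associated generalized Springer map $\pi:G\times^Q(n(\q)+\bar{\0_t})\to\bar{\0}$ has image $\bar{\0}=\bar{\0}_x$; such data exist precisely because $\0$ is induced, and we must show $\deg(\pi)=1$ for every such choice.

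By Proposition \ref{degree},
$$\deg(\pi)=\sum_{\phi}\mtp\bigl(\rho_{(x,\phi)},\Ind_{W(L)}^W(\rho_{(t,1)})\bigr)\deg\phi,$$
where $\phi$ runs over the irreducible representations of $A(\0_x)=A(\0)$ for which $(\0,\phi)$ is a relevant pair. Since $A(\0)=\{1\}$, the only candidate is the trivial representation, which has $\deg\phi=1$; moreover $(\0,1)$ is always relevant (the trivial pair lies in the image of the Springer correspondence for any nilpotent orbit), so it genuinely contributes. Hence the sum has exactly one term and $\deg(\pi)=\mtp(\rho_{(x,1)},\Ind_{W(L)}^W(\rho_{(t,1)}))$.

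It then remains only to invoke the multiplicity-one statement recalled from section 3.8 of \cite{B-M}, namely $\mtp(\rho_{(x,1)},\Ind_{W(L)}^W(\rho_{(t,1)}))=1$. This gives $\deg(\pi)=1$, i.e.\ $\pi$ is birational, for every generalized Springer map onto $\bar{\0}$. I do not expect any genuine obstacle: the entire content is packaged in Proposition \ref{degree} and the cited multiplicity-one fact. The only point requiring a word of care is verifying that the trivial pair $(\0,1)$ is relevant, so that it actually appears in the sum rather than leaving it empty; but this is standard for the Springer correspondence.
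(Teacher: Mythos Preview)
Your proposal is correct and follows essentially the same approach as the paper: both combine Proposition~\ref{degree} with the multiplicity-one fact from \cite{B-M}, section 3.8, noting that the hypothesis $A(\0)=\{1\}$ reduces the degree sum to the single term $\mtp(\rho_{(x,1)},\Ind_{W(L)}^W(\rho_{(t,1)}))=1$. Your added remark that the trivial pair $(\0,1)$ is always relevant is a reasonable point of care, though the paper leaves it implicit.
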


Recall that a complex variety $Z$ of dimension $n$ is called {\em
rationally smooth} at a point $z \in Z$ if
\begin{equation*}
H_i(Z, Z\setminus \{z\}; \qit) = \begin{cases} \qit & \text{if
$i=2n$}, \\
 0 & \text{otherwise}.
\end{cases}
\end{equation*}
For a generalized Springer map $\pi: Z:=G \times^Q
(n(\q)+\bar{\0_t}) \to \bar{\0}_x$, an orbit $\0_{x'} \subset
\bar{\0}_x$ is called {\em $\pi$-relevant} if $2 \dim \pi^{-1}(x')
= \dim \0_x - \dim \0_{x'}.$
\begin{Prop}[\cite{B-M}, Proposition 3.6]\label{relevance}
Assume that $Z$ is rationally smooth at points in $\pi^{-1}(x')$.
Then $\0_{x'}$ is $\pi$-relevant if and only if $$\mtp(\rho_{(x',
1)}, \Ind_{W(L)}^W \rho_{(t,1)}) \neq 0.$$
\end{Prop}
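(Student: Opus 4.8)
The plan is to deduce this from the decomposition theorem applied to the proper, $G$-equivariant, generically finite map $\pi$, in the spirit of Borho--MacPherson. Put $d_x=\dim \0_x=\dim Z$, $d'=\dim \0_{x'}$ and $\delta=\dim\pi^{-1}(x')$. First I would recall that the generalized Springer map $\pi$ is semismall (cf.\ \cite{L-S,B-M}), so that the pushforward $\pi_*\mathrm{IC}_Z$ of the intersection cohomology complex of $Z$ is again a perverse sheaf on $\bar{\0}_x$; being $G$-equivariant, it therefore decomposes, without shifts, as
\[
\pi_*\mathrm{IC}_Z\;\cong\;\bigoplus_{(x'',\phi)}\mathrm{IC}(\bar{\0}_{x''},\L_\phi)^{\oplus m_{(x'',\phi)}},
\]
the sum being over relevant pairs $(\0_{x''},\phi)$ with $\bar{\0}_{x''}\subseteq\bar{\0}_x$, where $\L_\phi$ is the $G$-equivariant local system on $\0_{x''}$ attached to $\phi$. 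By the computation underlying Proposition \ref{degree} (Borho--MacPherson), one has $m_{(x'',\phi)}=\mtp(\rho_{(x'',\phi)},\Ind_{W(L)}^W\rho_{(t,1)})$, so the assertion reduces to the statement that $\0_{x'}$ is $\pi$-relevant if and only if $m_{(x',1)}\neq 0$.

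Next I would extract this by comparing stalks at $x'$ in cohomological degree $-d'$. Since $Z$ is rationally smooth along $\pi^{-1}(x')$, the complex $\mathrm{IC}_Z$ coincides with $\qit_Z[d_x]$ on a neighbourhood of the fibre, so proper base change gives $\mathcal H^{-d'}(\pi_*\mathrm{IC}_Z)_{x'}\cong H^{\,d_x-d'}(\pi^{-1}(x'),\qit)$. On the other hand, the support conditions for the simple perverse sheaves $\mathrm{IC}(\bar{\0}_{x''},\L_\phi)$ force their stalk at $x'$ to vanish in all degrees $>-d'$ when $\0_{x'}\subsetneq\0_{x''}$, and to be concentrated in degree exactly $-d'$, equal to $\phi$, when $\0_{x''}=\0_{x'}$. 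Hence only the summands supported on $\bar{\0}_{x'}$ itself contribute in degree $-d'$, and the decomposition above yields an isomorphism of $A(\0_{x'})$-modules
\[
H^{\,d_x-d'}(\pi^{-1}(x'),\qit)\;\cong\;\bigoplus_{\phi}\phi^{\oplus m_{(x',\phi)}}.
\]

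Finally I would read off both implications from this identity. If $\0_{x'}$ is $\pi$-relevant, then $d_x-d'=2\delta$, so the left side is the top cohomology $H^{2\delta}(\pi^{-1}(x'),\qit)$; this is nonzero and, being spanned by the fundamental classes of the top-dimensional irreducible components of $\pi^{-1}(x')$, which $G_{x'}$ merely permutes, it is a permutation representation of $A(\0_{x'})$ and so contains the trivial representation, forcing $m_{(x',1)}\neq 0$. Conversely, if $\0_{x'}$ is not $\pi$-relevant, semismallness gives the strict inequality $2\delta<d_x-d'$, whence $H^{\,d_x-d'}(\pi^{-1}(x'),\qit)=0$ for dimension reasons, so $m_{(x',\phi)}=0$ for every $\phi$, in particular $m_{(x',1)}=0$. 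This proves the equivalence. I expect the main obstacle (and the only genuinely nonformal input) to be the two structural facts fed into the first paragraph: the semismallness of the generalized Springer map and the Borho--MacPherson identification of the decomposition multiplicities $m_{(x'',\phi)}$ with multiplicities in $\Ind_{W(L)}^W\rho_{(t,1)}$; once these are in hand the remaining argument is the stalk computation above, the rational-smoothness hypothesis entering only to replace $\mathrm{IC}_Z$ by $\qit_Z[d_x]$ near $\pi^{-1}(x')$.
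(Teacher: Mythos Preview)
The paper does not give its own proof of this proposition; it is quoted directly from Borho--MacPherson \cite{B-M}, Proposition 3.6, and used as a black box. Your proposal is therefore not being compared against a proof in the paper but is a reconstruction of the original Borho--MacPherson argument, and as such it is essentially correct and follows their approach: decomposition theorem for a semismall map, identification of the multiplicities via the Springer correspondence, and a stalk comparison in the critical degree.

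One small imprecision is worth flagging. You write that for $\0_{x'}\subsetneq\bar{\0}_{x''}$ the stalks of $\mathrm{IC}(\bar{\0}_{x''},\L_\phi)$ at $x'$ vanish ``in all degrees $>-d'$'', and then conclude that only the summands with $\0_{x''}=\0_{x'}$ contribute in degree $-d'$. The first statement, as written, is just the perverse support condition and does \emph{not} by itself exclude a contribution in degree $-d'$. What you need (and what is true) is the \emph{strict} support condition characterizing the intermediate extension: on a boundary stratum of dimension $d'$ one has $\mathcal H^i\bigl(\mathrm{IC}(\bar{\0}_{x''},\L_\phi)\bigr)\big|_{\0_{x'}}=0$ for all $i\geq -d'$. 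With this corrected inequality your stalk computation goes through verbatim, and the two implications you draw (top cohomology of the fibre is a nonzero permutation module of $A(\0_{x'})$ in the relevant case; vanishing by dimension in the non-relevant case) are exactly the Borho--MacPherson argument.
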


When $t=0$, we have $Z \simeq T^*(G/P)$ is smooth, $\pi$ is the
moment map and $\0_x$ is the Richardson orbit associated to $P$.
In this case, $\rho_{(t,1)} = \varepsilon_{W(L)}$ is the sign
representation and we have a geometric interpretation of the
multiplicity.
\begin{Prop}[\cite{B-M}, Corollary 3.5]\label{component}
For the map $\pi: T^*(G/P) \to \bar{\0}_x$, the multiplicity
$\mtp(\rho_{(x', 1)}, \Ind_{W(L)}^W \varepsilon_{W(L)})$ is the
number of irreducible components of $\pi^{-1}(\0_{x'})$ of
dimension $\dim \0_x + (\dim G/P - 1/2 \dim \0_x)$.
\end{Prop}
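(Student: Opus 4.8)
This is Corollary 3.5 of \cite{B-M}, recalled here for later use; since the author merely cites it, let me indicate the argument one would give. The plan is to apply the decomposition theorem to the projective morphism $\pi: T^*(G/P)\to\bar{\0}_x$, exploiting that for $t=0$ the source is a smooth symplectic variety and $\pi$ is a generically finite Poisson morphism onto the Richardson orbit closure $\bar{\0}_x$ (so $\dim\0_x=2\dim G/P$), which is moreover semismall. Consequently $R\pi_*\,\cit_{T^*(G/P)}[\dim T^*(G/P)]$ is a semisimple perverse sheaf on $\bar{\0}_x$, hence a direct sum of (unshifted) intersection cohomology complexes $IC(\bar{\0}_{x'},\mathcal L)$, where $\0_{x'}$ runs over the nilpotent orbits in $\bar{\0}_x$ and $\mathcal L$ over irreducible $G$-equivariant local systems on $\0_{x'}$, i.e. over irreducible representations $\phi$ of $A(\0_{x'})$. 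The quantity to be identified is the multiplicity of $IC(\bar{\0}_{x'},\mathrm{triv})$ in this decomposition.

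First I would reduce the count of components to that multiplicity. Since $\pi$ is $G$-equivariant and $\0_{x'}=G/G_{x'}$, one has $\pi^{-1}(\0_{x'})\cong G\times^{G_{x'}}\pi^{-1}(x')$, so its irreducible components correspond to $A(\0_{x'})$-orbits on the set of irreducible components of $\pi^{-1}(x')$, compatibly with dimension; thus the components of $\pi^{-1}(\0_{x'})$ of the relevant (maximal) dimension $\dim\0_{x'}+d$, with $d:=\dim G/P-\tfrac12\dim\0_{x'}$, correspond to $A(\0_{x'})$-orbits on the set $B_{x'}$ of top-dimensional components of $\pi^{-1}(x')$. Now $B_{x'}$ is a basis (of fundamental classes) of the stalk at $x'$ of the top-degree cohomology local system of $R\pi_*\,\cit$ along $\0_{x'}$, so this local system is the permutation $A(\0_{x'})$-module on $B_{x'}$; the number of $A(\0_{x'})$-orbits on $B_{x'}$ therefore equals $\mtp(\mathrm{triv},B_{x'})$, which by semismallness and the decomposition above is precisely the multiplicity of $IC(\bar{\0}_{x'},\mathrm{triv})$ in $R\pi_*\,\cit_{T^*(G/P)}[\dim T^*(G/P)]$.

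It then remains to identify this multiplicity with $\mtp(\rho_{(x',1)},\Ind_{W(L)}^W\varepsilon_{W(L)})$, and here one compares with the Springer sheaf $\mathbf S:=R(\mu_B)_*\,\cit_{T^*(G/B)}[\dim T^*(G/B)]$ of the Springer resolution $\mu_B:T^*(G/B)\to\mathcal N$. Using the fibration $G/B\to G/P$ and the compatibility of the two Grothendieck–Springer maps, one shows that $R\pi_*\,\cit_{T^*(G/P)}[\dim T^*(G/P)]$ is the summand of $\mathbf S$ singled out by the sign character $\varepsilon_{W(L)}$ of $W(L)$; combined with the $W$-equivariant Springer decomposition $\mathbf S=\bigoplus_{(x',\phi)}IC(\bar{\0}_{x'},\mathcal L_\phi)\otimes\rho_{(x',\phi)}$ and Frobenius reciprocity, the multiplicity of $IC(\bar{\0}_{x'},\mathrm{triv})=IC(\bar{\0}_{x'},\mathcal L_1)$ in $R\pi_*\,\cit_{T^*(G/P)}[\dim T^*(G/P)]$ is $\mtp(\rho_{(x',1)},\Ind_{W(L)}^W\varepsilon_{W(L)})$, since $\rho_{(x',1)}$ is the partner of the trivial local system under the Springer correspondence. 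Together with the previous paragraph this is the assertion. The main obstacle is exactly the sign bookkeeping in this last step — the appearance of $\varepsilon_{W(L)}$ rather than the trivial representation, reflecting the classical twist by $\varepsilon$ between the two realizations of the $W$-action on Springer fibres — together with the verification that $\pi$ is semismall; both are carried out in \cite{B-M}, so for the purposes of this paper one may simply invoke Corollary 3.5 there.
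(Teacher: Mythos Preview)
The paper gives no proof here, simply recording the result as Corollary~3.5 of \cite{B-M}; your sketch is a correct outline of precisely the Borho--MacPherson argument (decomposition theorem for the semismall map $\pi$, identification of the top-cohomology local system over $\0_{x'}$ with the permutation $A(\0_{x'})$-module on top-dimensional components of $\pi^{-1}(x')$, and comparison with the Springer sheaf to extract the multiplicity of $\rho_{(x',1)}$ via the $\varepsilon_{W(L)}$-isotypic piece). You also silently repaired what appears to be a typo in the stated dimension (the intended quantity is $\dim\0_{x'}+(\dim G/P-\tfrac12\dim\0_{x'})=\dim G/P+\tfrac12\dim\0_{x'}$), which is the correct critical dimension for a semismall map.
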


\section{Birational generalized Springer maps}\label{prop}
Throughout the paper, we will use notations in \cite{McG} (section
5.7) for nilpotent orbits in exceptional Lie algebras and we use
partitions for those in classical Lie algebras. In this section,
we classify nilpotent orbits in a simple exceptional Lie algebra
which is the image of a birational generalized Springer map. More
precisely, we prove the following proposition.
\begin{Prop} \label{key}
Let $\0$ be an induced nilpotent orbit in a simple complex
exceptional Lie algebra. The closure  $\bar{\0}$ is the image of a
birational generalized Springer map if and only if $\0$ is not one
of the following orbits:
 $A_2+A_1, A_4+A_1$ in $E_7$,
$A_4+A_1, A_4+2A_1$ in $E_8$.
\end{Prop}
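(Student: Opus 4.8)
The plan is to proceed by a case-by-case analysis over all induced nilpotent orbits $\0$ in each of $G_2, F_4, E_6, E_7, E_8$, using the degree formula of Proposition \ref{degree} together with Corollary \ref{A=1}. First I would dispose of all orbits with trivial component group $A(\0)=\{1\}$: by Corollary \ref{A=1} every generalized Springer map onto such a $\bar{\0}$ is automatically birational, so these orbits never appear in the exceptional list, and one only needs to check that they are indeed induced (which is classical, see e.g. the tables of induced orbits). This already shows that in $G_2, F_4, E_6$ the statement reduces to checking the (few) orbits with nontrivial $A(\0)$, and that in $E_7, E_8$ the candidate "bad" orbits must be among those with $A(\0)\neq\{1\}$.

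For the remaining orbits — those induced orbits $\0=\0_x$ with $A(\0_x)$ nontrivial — I would, for each, enumerate the pairs $(L,\0_t)$ (up to $G$-conjugacy) that induce $\0$, i.e. the data of a Levi $\l=l(\q)$ and a nilpotent orbit $\0_t$ in $\l$ with $\Ind_\l^\g \0_t = \0$. For each such pair I would compute $\deg\pi = \sum_\phi \mtp(\rho_{(x,\phi)}, \Ind_{W(L)}^W \rho_{(t,1)})\deg\phi$ via Proposition \ref{degree}. Here the Springer correspondents $\rho_{(x,\phi)}$ for exceptional $\g$ are read off from the tables in \cite{Car} (Section 13.3); the representation $\rho_{(t,1)}$ of $W(L)$ is identified (using that the Springer correspondence for the smaller group $L$ is known, with sign twists as needed), and the induction $\Ind_{W(L)}^W\rho_{(t,1)}$ is decomposed using the $j$-induction / truncated induction data and the multiplicity tables of \cite{Alv}, factoring through a maximal parabolic $W_0$ as in the remark following Proposition \ref{degree} and using Littlewood--Richardson when $W_0$ is classical. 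The orbit $\0$ is the image of a \emph{birational} generalized Springer map iff some such pair yields $\deg\pi=1$; since the term $\phi=1$ always contributes $1$ (the remark in 3.8 of \cite{B-M}), this is equivalent to demanding that $\mtp(\rho_{(x,\phi)},\Ind_{W(L)}^W\rho_{(t,1)})=0$ for every relevant pair $(\0_x,\phi)$ with $\phi\neq 1$, for at least one inducing datum.

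The heart of the proof — and the main obstacle — is precisely this finite but delicate bookkeeping for the orbits with the largest component groups (e.g. orbits $A_2+A_1$, $A_4+A_1$, $A_4+2A_1$ in $E_7$ and $E_8$), where one must verify that for \emph{every} admissible inducing pair $(L,\0_t)$ some nontrivial $\phi$ occurs in the induced representation, hence $\deg\pi\geq 2$ always. This requires a complete list of the inducing Levis for these orbits (via the Lusztig--Spaltenstein induction tables or the Bala--Carter / weighted Dynkin diagram data) and careful decomposition of each $\Ind_{W(L)}^W\rho_{(t,1)}$; one has to be especially careful that $\0_t$ may itself be a non-trivial orbit in $\l$ (not just the zero orbit), so $\rho_{(t,1)}$ need not be the sign character. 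For the converse direction (the four listed orbits genuinely admit no birational generalized Springer map) the computation is a finite check but must be exhaustive over all inducing data; a useful sanity check is that these are exactly the orbits for which, in every inducing pair, the multiplicity of some nontrivial-$\phi$ Springer correspondent forces $\deg\pi>1$. Finally I would cross-check the resulting list against known constraints — e.g. that whenever $\tilde{\0}$ admits a crepant (Springer) resolution there is certainly a birational generalized Springer map with $\0_t=0$, and against the $\qit$-factoriality classification in Proposition \ref{Fact} — to confirm no orbit has been mislabeled.
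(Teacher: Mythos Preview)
Your proposal is correct and follows essentially the same route as the paper: reduce via Corollary \ref{A=1} to induced orbits with nontrivial $A(\0)$, then for each such orbit compute the degree of candidate generalized Springer maps using Proposition \ref{degree} and the Alvis induction tables, exhibiting a birational one when possible and exhaustively ruling out all inducing data for the four listed orbits. The only minor difference is organizational: the paper further prunes the list at the outset by discarding orbits already known to admit a Springer resolution (birational with $\0_t=0$), which you mention only as a final cross-check; this shortens the case analysis but does not change the argument.
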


By Corollary \ref{A=1}, to prove Proposition \ref{key}, we just
need to consider induced orbits with non-trivial $A(\0)$ but
having no Springer resolutions. The classification of
induced/rigid orbits in exceptional Lie algebras can be found for
example in \cite{McG} (section 5.7). We will use the tables
therein to do a case-by-case check. Note that the group $G$
therein is simply-connected, thus $A(x)$ in these tables is
$\pi_1(\0_x)$. One can get $A(\0)$ by just omitting the copies of
$\zit/d \zit,d=2,3$ when it presents. When $A(\0)$ is $S_2$ (resp.
$S_3$), we will denote by $\epsilon$ (resp. $\epsilon_1,
\epsilon_2$) its non-trivial irreducible representations. By the
remark in section 3.8 \cite{B-M}, $\mtp(\rho_{(x, 1)},
\Ind_{W(L)}^W(\rho_{(t, 1)})) = 1$, so we just need to compute the
multiplicity $\mtp(\rho_{(x, \phi)}, \Ind_{W(L)}^W(\rho_{(t,
1)}))$ for non-trivial $\phi$.

\subsection{$F_4$}

\quad \ There are two orbits to be considered: $B_2$ and
$C_3(a_1)$. The orbit $B_2$ is induced from $(C_3, 21^4)$. We have
$\rho_{(t, 1)} = [1^3:-]$ and $\rho_{(x, \epsilon)} =
\phi_{4,8}=\chi_{4,1}.$ By \cite{Alv} (p. 143), we get
$\mtp(\rho_{(x, \epsilon)}, \Ind_{W(C_3A_1)}^W
\Ind_{W(C_3)}^{W(C_3A_1)}(\rho_{(t,1)})) = 0$, thus the degree of
the associated generalized Springer map  is one. The orbit
$C_3(a_1)$ is induced from $(B_3, 2^21^3)$.  We have $\rho_{(t,
1)}=[-:21]$ and $\rho_{(x, \epsilon)} = \phi_{4,7'}=\chi_{4,4}.$
By \cite{Alv} (p. 147), the degree of $\pi$ is one.

\subsection{$E_6$}

\quad \ When $\g=E_6$, every induced orbit either has
$A(\0)=\{1\}$ or  admits a Springer resolution.

\subsection{$E_7$}\label{E7}

\quad \ We have four orbits to be considered: $A_3+A_2, D_5(a_1)$,
$A_2+A_1$ and $A_4+A_1$.

The orbit $A_3+A_2$ is induced from $(D_6, 32^21^5)$. A calculus
shows that the associated generalized Springer map has degree 2.
By a dimension counting, it is also induced from $(D_5+A_1,
[2^21^6] \times [1^2])$. For this induction, one has $\rho_{(t,
1)}=[1:1^4]\times[1^2]$ and $\rho_{(x, \epsilon)} =
\phi_{84,15}=84_a^*.$ By \cite{Alv} (p. 49), one gets $
\mtp(84_a^*, \Ind_{W(D_5A_1)}^W [1:1^4]\times[1^2]) = \mtp(84_a,
\Ind_{W(D_5A_1)}^W [4:1]\times[2]) = 0 $, thus the induced
generalized Springer map is birational. The orbit $D_5(a_1)$ is a
Richardson orbit but its closure has no Springer resolutions
(\cite{Fu}).  By Thm. 5.3 \cite{McG}, it is induced from $(D_6,
3^22^21^2)$. One finds $\rho_{(t, 1)}=[1^2:21^2]$ and $\rho_{(x,
\epsilon)}=\phi_{336,11} = 336_a^*.$ Now by \cite{Alv} (p. 43),
the degree is one.

 The orbit $A_2+A_1$ has a unique induction (by
dimension counting) given by $(E_6, A_1)$. We have $\rho_{(t,
1)}=6_p^*$ and $\rho_{(x, \epsilon)} = \phi_{105, 26} = 105_a$. By
\cite{Alv} (p. 51),  the degree is 2.  The orbit $A_4+A_1$ is a
Richardson orbit with no crepant resolutions (\cite{Fu}), i.e. the
degree given by the induction $(A_2+2A_1, 0)$ is of degree 2. It
has three other inductions, given by $(E_6, A_2+2A_1)$, $(A_6,
2^21^3)$ and $(A_5+A_1, 2^41^2+0)$. One shows that every such
induction gives a generalized Springer map of degree 2.

\subsection{$E_8$}
\quad \ We need to consider the following orbits: $A_3+A_2,
D_5(a_1), D_6(a_2), E_6(a_3)+A_1, E_7(a_5), E_7(a_4),
E_6(a_1)+A_1, E_7(a_3)$, $A_4+A_1$ and  $A_4+2A_1$. The orbit
$A_3+A_2$ is induced from $(D_7, 2^21^{10})$. We have $\rho_{(t,
1)}=[1:1^6]$ and $\rho_{(x, \epsilon)} = \phi_{972,32}=972_x^*$.
By \cite{Alv} (p. 105), we get $\deg =1$. The orbit $D_5(a_1)$ is
induced from $(E_7, A_2+A_1)$ by Thm. 5.3 \cite{McG}. We have
$\rho_{(t, 1)}=120_a^*$ and $\rho_{(x, \epsilon)} =
\phi_{2100,28}=2100_x^*$. By \cite{Alv} (p. 140), we get $\deg
=1$, while the induction from $(E_6, A_1)$ gives a map of degree
2. The orbit $D_6(a_2)$ is induced from $(D_7, 32^41^3)$. We have
$\rho_{(t, 1)}=[-:2^31]$ and $\rho_{(x, \epsilon)} =
\phi_{2688,20}=2688_y$. By \cite{Alv}(p. 106), we get $\deg=1$.
The orbit $E_6(a_3)+A_1$ is induced from $(E_7, 2A_2+A_1)$. We
have $\rho_{(t, 1)}=\phi_{70,18}=70_a$ and $\rho_{(x, \epsilon)} =
\phi_{1134,20}=1134_y$.  By \cite{Alv}(p. 139), we get $\deg=1$.
The orbit $E_7(a_5)$ has $A(\0)=S_3$ and is induced from
$(E_6+A_1, 3A_1+0)$. We have $\rho_{(t, 1)}=\phi_{15,16} \times
[1^2] = 15_q^* \times [1^2]$, $\rho_{(x, \epsilon_1)} =
\phi_{5600,19}=5600_w, \rho_{(x,\epsilon_2)} =
\phi_{448,25}=448_w$. By \cite{Alv} (p. 136), we get $\deg=1$. The
orbit $E_7(a_4)$ is induced from $(E_7, A_3+A_2)$. We have
$\rho_{(t, 1)}=\phi_{378,14}=378_a$ and $\rho_{(x, \epsilon)} =
\phi_{700,16}=700_{xx}$. By \cite{Alv}(p. 139), we get $\deg=1$.
The orbit $E_6(a_1)+A_1$ is induced from $(E_7, A_4+A_1)$. We have
$\rho_{(t, 1)}=\phi_{512,11}=512_a^*$ and $\rho_{(x, \epsilon)}
=\phi_{4096,12}=4096_x$. By \cite{Alv}(p. 141), we get $\deg=1$.
The orbit $E_7(a_3)$ is induced from $(D_6, 3^22^21^2)$. We have
$\rho_{(t, 1)}=[1^2:21^2]$ and $\rho_{(x, \epsilon)} =
\phi_{1296,13}=1296_z$. By \cite{Alv}(p. 43), we get $
\Ind_{W(D_6)}^{W(E_7)} [1^2:21^2] =
189_b+189_c+315_a+280_a+336_a+216_a+512_a+378_a+420_a. $ Now by
\cite{Alv}(p.138, p.140), we get $\deg=1$.

The orbit $A_4+A_1$ has a unique induction given by $(E_6+A_1,
A_1+0)$, which gives a generalized Springer map of degree 2. The
orbit $A_4+2A_1$ has a unique induction, given by $(D_7, 2^41^6)$.
This gives a map of degree 2.

This concludes the proof of Proposition \ref{key}.

\section{$\qit$-factoriality}\label{qf}

In this section, we study the problem of $\qit$-factoriality of
the normalization of a nilpotent orbit closure.
\begin{Lem} \label{Picard}
Let $\0_x$ be a nilpotent orbit in a complex simple Lie algebra
and $(G_x)^\circ$ the identity component of the stabilizer $G_x$
in $G$. Assume that the character group $\chi((G_x)^\circ)$ is
finite, then $\Pic(\0_x)$ is finite and $\bar{\0}_x$ is
$\qit$-factorial.
\end{Lem}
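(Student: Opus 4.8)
The statement links a Lie-theoretic finiteness condition to a geometric consequence. The key geometric fact is that for a normal variety $X$ with a resolution $f\colon \tilde X \to X$, $\qit$-factoriality (more precisely, $\qit$-factoriality of the normalization, which $\bar\0_x$ is known to be in the nilpotent setting since nilpotent orbit closures are normal in exceptional types after normalization) is controlled by the divisor class group, and hence ultimately by the Picard group of the smooth locus. Since $\0_x$ is the smooth locus of $\bar\0_x$ (the boundary $\bar\0_x \setminus \0_x$ has codimension $\ge 2$), one has $\mathrm{Cl}(\bar\0_x) \cong \mathrm{Cl}(\0_x) = \Pic(\0_x)$, and $\bar\0_x$ is $\qit$-factorial precisely when this group is torsion. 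So the whole statement reduces to: $\chi((G_x)^\circ)$ finite $\Rightarrow$ $\Pic(\0_x)$ finite.

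\textbf{Step 1.} Identify $\Pic(\0_x)$ group-theoretically. Since $\0_x \cong G/G_x$ is a homogeneous space under the connected group $G$, line bundles on it correspond to characters of the stabilizer: there is an exact sequence (coming from $G$ being simply connected as the adjoint group of a simple algebra has $\Pic$ captured this way, or more carefully by passing to a simply connected cover $\tilde G$) relating $\Pic(G/G_x)$ to $\chi(G_x)$ modulo the restriction of $\chi(G)$, with a correction term from $\Pic(G)$. The cleanest route: $\Pic(G/H) \cong \chi(H)$ when $G$ is simply connected with $\Pic(G)=0$; in general there is a finite kernel/cokernel coming from $\pi_1(G)$ and $\Pic(G)$, both of which are finite for $G$ semisimple. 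Thus $\Pic(\0_x)$ is finite if and only if $\chi(G_x)$ is finite.

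\textbf{Step 2.} Relate $\chi(G_x)$ to $\chi((G_x)^\circ)$. There is an exact sequence
\[
0 \to \chi(G_x) \to \chi((G_x)^\circ)^{A(\0_x)}
\]
coming from restriction, where $A(\0_x) = G_x/(G_x)^\circ$ is the (finite) component group; more simply, the restriction map $\chi(G_x) \to \chi((G_x)^\circ)$ has kernel inside $\chi(A(\0_x))$, which is finite, and since $A(\0_x)$ is finite, $\chi(G_x)$ is finite as soon as $\chi((G_x)^\circ)$ is. Hence the hypothesis that $\chi((G_x)^\circ)$ is finite forces $\chi(G_x)$ finite, hence $\Pic(\0_x)$ finite by Step 1.

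\textbf{Step 3.} Deduce $\qit$-factoriality. With $\Pic(\0_x)$ finite, $\mathrm{Cl}(\bar\0_x) \cong \Pic(\0_x)$ is finite; every Weil divisor $D$ on $\bar\0_x$ then has $nD$ Cartier for some $n>0$, which is exactly the definition of $\qit$-factoriality. This completes the argument.

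\textbf{Main obstacle.} The only genuinely delicate point is Step 1: making precise the passage between $\Pic(\0_x)$ and $\chi(G_x)$ when $G$ is the adjoint group rather than simply connected, keeping careful track that the discrepancy (governed by $\pi_1(G)$ and $\Pic(G) = \mathrm{Hom}(\pi_1(G), \cit^*)$) is finite. One should either pull everything back to a simply connected cover $\tilde G$ (where $\0_x$ lifts to $\tilde G/\tilde G_x$ with the same underlying variety) and use $\Pic(\tilde G/\tilde G_x) = \chi(\tilde G_x)$, noting $\tilde G_x$ has the same identity component up to isogeny, or invoke the standard exact sequence for Picard groups of homogeneous spaces directly. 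Everything else is formal bookkeeping with finite groups, and the reduction of $\qit$-factoriality to finiteness of the class group is standard.
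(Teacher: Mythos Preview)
Your proposal is correct and follows essentially the same route as the paper: both pass from finiteness of $\chi((G_x)^\circ)$ to finiteness of $\chi(G_x)$ via the short exact sequence $1 \to (G_x)^\circ \to G_x \to A(\0_x) \to 1$ (using that $A(\0_x)$ is finite), then use the standard exact sequence for $\Pic$ of the homogeneous space $G/G_x$ (with the correction term controlled by the finite group $\Pic(G)$) to conclude that $\Pic(\0_x)$ is finite, and finally invoke $\codim(\bar\0_x \setminus \0_x) \geq 2$ for $\qit$-factoriality. The paper's argument is terser and runs your Steps~1 and~2 in the opposite order, but the content is the same.
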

\begin{proof}
The exact sequence $1 \to (G_x)^\circ \xrightarrow{i} G_x \to
A(\0_x):=G_x/(G_x)^\circ \to 1$ induces an exact sequence: $1 \to
\chi(A(\0_x)) \to \chi(G_x) \to \Im(i^*) \to 1$. By assumption,
$\chi((G_x)^\circ)$ is finite, so is $\Im(i^*)$. On the other
hand, $A(\0_x)$ is a finite group, thus $\chi(A(\0_x))$ is also
finite. This gives the finiteness of $\chi(G_x)$. The exact
sequence $1 \to G_x \to G \xrightarrow{q} \0_x \to 1$ induces an
exact sequence $1 \to \chi(G_x) \to \Pic(\0_x) \to \Im(q^*) \to
1$. As $\Pic(G)$ is finite, so is $\Im(q^*)$. This proves that
$\Pic(\0_x)$ is finite. The last claim follows  from $\codim
(\bar{\0}_x \setminus \0_x) \geq 2$.
\end{proof}
\begin{Rque}
It is a subtle problem to work out explicitly the group
$\Pic(\0_x)$, since in general $q^*, i^*$ are not surjective.
\end{Rque}
\begin{Lem}\label{Q-fac}
Let $\pi: T^*(G/P) \to \bar{\0}$ be a Springer resolution. Then
$\tilde{\0}$ is $\qit$-factorial if and only if the number of
irreducible exceptional divisors of $\pi$ equals to $b_2(G/P)$.
\end{Lem}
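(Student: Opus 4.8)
The plan is to compare Weil and Cartier divisor class groups on $\bar\0$ (equivalently on $\tilde\0$) by pulling everything back to the resolution $\pi: T^*(G/P) \to \bar\0$. The key point is that $\tilde\0$ is $\qit$-factorial precisely when every $\qit$-Weil divisor on $\tilde\0$ is $\qit$-Cartier, and on a variety with rational singularities this can be tested after a resolution. Concretely, $\pi$ is a projective birational morphism from the smooth symplectic variety $T^*(G/P)$, and since $\codim_{\bar\0}(\bar\0\setminus\0)\ge 2$, every divisor class on $\tilde\0$ is the pushforward of a unique divisor class on the smooth locus; so the relevant exact sequence is
$$
0 \to \bigoplus_{i} \zit\,[E_i] \to \Pic(T^*(G/P)) \otimes \qit \to \operatorname{Cl}(\tilde\0)\otimes\qit \to 0,
$$
where the $E_i$ are the irreducible exceptional divisors of $\pi$, using that $T^*(G/P)$ retracts onto $G/P$ so that $\Pic(T^*(G/P))\otimes\qit \cong \Pic(G/P)\otimes\qit \cong H^2(G/P,\qit)$, of dimension $b_2(G/P)$. (One must check the exceptional divisor classes are linearly independent in $\Pic\otimes\qit$, which holds because $\pi$ is birational with source smooth: intersect with curves contracted by $\pi$, or use that the $E_i$ are distinct prime divisors mapping into the codimension-$\ge 2$ locus.)

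First I would record that $\operatorname{Cl}(\tilde\0)\otimes\qit$ is exactly the group of $\qit$-Weil divisor classes on $\tilde\0$, and that $\tilde\0$ is $\qit$-factorial iff this group is trivial, i.e. iff $\operatorname{Cl}(\tilde\0)\otimes\qit = 0$. Indeed $\Pic(\0)$ is finite (Namikawa; or combine with Lemma \ref{Picard} in the cases at hand), so $\operatorname{Cl}(\0)\otimes\qit = 0$ and every Weil divisor on $\tilde\0$ is supported, up to linear equivalence, on $\bar\0\setminus\0$, which has codimension $\ge 2$ and thus contributes nothing — hence $\operatorname{Cl}(\tilde\0)\otimes\qit = 0$ automatically. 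So this naive count is not quite the right one, and the correct reformulation is: $\tilde\0$ is $\qit$-factorial iff the natural map $\Pic(\tilde\0)\otimes\qit \to \operatorname{Cl}(\tilde\0)\otimes\qit$ is an isomorphism — but since both sides are the pullback data along $\pi$ I instead work relatively, via the exact sequence above read as a statement about the relative Picard group $\Pic(T^*(G/P)/\bar\0)$.

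The main step, then, is the following dichotomy. By a theorem on symplectic resolutions (every such $\pi$ is semismall and $T^*(G/P)$ is smooth symplectic), the classes $[E_i]$ span a subspace of $\Pic(T^*(G/P))\otimes\qit$, and $\tilde\0$ fails to be $\qit$-factorial exactly when there is a line bundle $\L$ on $T^*(G/P)$, nontrivial in $\Pic\otimes\qit$, whose restriction to a general fibre of $\pi$ over every exceptional divisor, and over the generic point of $\bar\0$, is numerically trivial — equivalently, when $[E_1],\dots,[E_r]$ do \emph{not} span all of $\Pic(T^*(G/P))\otimes\qit$. Thus $\tilde\0$ is $\qit$-factorial $\iff r = \dim_\qit \Pic(T^*(G/P))\otimes\qit = b_2(G/P)$. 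I would prove the forward direction by taking, for a non-$\qit$-factorial $\tilde\0$, a Weil divisor $D$ that is not $\qit$-Cartier; its proper transform plus exceptional correction $\tilde D = \pi^* D = \pi_*^{-1}D + \sum a_i E_i$ makes sense only $\qit$-divisorially, and the obstruction lives precisely in $(\Pic(T^*(G/P))\otimes\qit)/\langle [E_i]\rangle$. For the converse, if $r < b_2(G/P)$ pick $\L$ outside the span of the $E_i$; then $\pi_*\L$ is a Weil divisor class on $\tilde\0$ that is not $\qit$-Cartier, since any $\qit$-Cartier representative would have $\qit$-Cartier pullback expressible through the $E_i$.

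I expect the main obstacle to be the \emph{linear independence and spanning analysis of the exceptional divisor classes} inside $\Pic(T^*(G/P))\otimes\qit \cong H^2(G/P,\qit)$ — that is, identifying the exact sequence above with the correct signs and making sure no hidden relations among the $[E_i]$ occur (they cannot, by the birationality of $\pi$ and smoothness of the source, but this needs the semismallness of generalized Springer maps from \cite{B-M} to control the fibre dimensions and hence the discrepancies $a_i$). Once that is in place, the equality ``$r = b_2(G/P)$'' is a pure rank count, and combined with Namikawa's finiteness of $\Pic(\0)$ it yields the stated criterion.
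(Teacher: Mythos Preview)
Your exact sequence is the right one, and the injectivity of the first map (which you correctly flag as needing justification, and which the paper takes from \cite{Nam}, Lemma 1.1.1) is indeed the crux. But the argument as written has a genuine gap, and it is precisely the point where you get confused and start back-tracking.

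The error is the sentence ``Indeed $\Pic(\0)$ is finite (Namikawa; or combine with Lemma~\ref{Picard} in the cases at hand).'' This is false in general: $\Pic(\0)$ can be infinite, and in fact since $\0$ is smooth and $\tilde{\0}\setminus\0$ has codimension $\geq 2$, one has $\operatorname{Cl}(\tilde{\0}) \cong \Pic(\0)$, so finiteness of $\Pic(\0)$ is \emph{equivalent} to $\qit$-factoriality of $\tilde{\0}$ --- it is the conclusion, not a hypothesis. You noticed the contradiction yourself (``$\operatorname{Cl}(\tilde{\0})\otimes\qit=0$ automatically''), but your attempted repair via relative Picard groups and a direct Weil/Cartier comparison still needs one missing input: in your converse direction, when you push $\mathcal{L}$ forward and claim the result is not $\qit$-Cartier because any $\qit$-Cartier pullback would lie in the span of the $[E_i]$, you are implicitly using that $\pi^*\Pic(\tilde{\0})\otimes\qit=0$ inside $\Pic(T^*(G/P))\otimes\qit$. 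Without this, a nontrivial Cartier class on $\tilde{\0}$ could pull back to something outside $\langle[E_i]\rangle$, and your obstruction vanishes.

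The paper supplies exactly this missing fact, and in one line: $\tilde{\0}$ carries a positive-weight $\cit^*$-action with a unique fixed point, so $\Pic(\tilde{\0})$ is trivial. Once you have $\Pic(\tilde{\0})=0$, the equivalence ``$\tilde{\0}$ $\qit$-factorial $\iff \operatorname{Cl}(\tilde{\0})\otimes\qit=0 \iff \Pic(\0)\otimes\qit=0$'' is immediate, and then your exact sequence (with the injectivity from \cite{Nam}) finishes the proof by a rank count. Add this observation and drop the muddled middle paragraphs; the proof becomes three lines.
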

\begin{proof}
As $\tilde{\0}$ admits a positive weighted $\cit^*$-action with a
unique fixed point, $\Pic(\tilde{\0})$ is trivial. As a
consequence, $\tilde{\0}$ is $\qit$-factorial if and only if
$\Pic(\0)$ is finite. Let $E_i, i=1, \cdots, k$ be the irreducible
exceptional divisors of $\pi$. We have the following exact
sequence:
$$
 \oplus_{i=1}^k \qit [E_i] \to \Pic(T^*(G/P))\otimes \qit \to
\Pic(\0) \otimes \qit \to 0.
$$ By \cite{Nam} (Lemma 1.1.1), the first map is injective.
Now it is clear that $\Pic(\0)$ is finite if and only if
$k=b_2(G/P)$.

\end{proof}

\begin{Prop}\label{Fact}
Let $\0$ be a nilpotent orbit in a simple exceptional Lie algebra
and $\tilde{\0}$ the normalization of its closure $\bar{\0}$. Then
$\tilde{\0}$ is $\qit$-factorial if and only if
 $\0$ is not one of the following orbits in $E_6$: $2A_1, A_2+A_1,
A_2+ 2A_1, A_3, A_3+A_1, A_4, A_4+A_1, D_5(a_1), D_5$.
\end{Prop}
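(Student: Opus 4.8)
The starting point, already appearing in the proof of Lemma \ref{Q-fac}, is that $\tilde\0$ carries a $\cit^*$-action with positive weights and a unique fixed point, so $\Pic(\tilde\0)=0$; since also $\codim(\tilde\0\setminus\0)\ge 2$, the variety $\tilde\0$ is $\qit$-factorial if and only if $\Pic(\0)$ is finite. The plan is to decide this orbit by orbit, using Proposition \ref{key} to tell us which geometric tool is available.

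For a rigid orbit one shows $\Pic(\0)$ is finite directly: a nontrivial torus $S\subseteq(G_x)^\circ$ would fix $x$, hence place $x$ in the Lie algebra $\g^S$ of the proper Levi subgroup $Z_G(S)$ and exhibit $\0$ as induced (Lusztig--Spaltenstein); so the reductive part of $(G_x)^\circ$ is semisimple, $\chi((G_x)^\circ)$ is finite, and Lemma \ref{Picard} applies. The four orbits $A_2+A_1,A_4+A_1$ in $E_7$ and $A_4+A_1,A_4+2A_1$ in $E_8$, which carry no birational generalized Springer map (Proposition \ref{key}), are handled the same way once one checks, from the centralizer data, that the $A(\0_x)$-fixed part of $\chi((G_x)^\circ)$ is finite --- this is the mild strengthening of Lemma \ref{Picard} with $\chi((G_x)^\circ)$ replaced by $\chi((G_x)^\circ)^{A(\0_x)}$, which follows from the same exact sequences (using that $\cit^*$ is divisible, so $\mathrm{Im}(i^*)$ is of finite index in the $A(\0_x)$-invariants).

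All remaining orbits are induced and, by Proposition \ref{key} (and, in $E_6$, by the fact that every induced orbit there has either trivial $A(\0)$ or a Springer resolution), carry a birational generalized Springer map $\pi\colon G\times^Q(n(\q)+\bar{\0_t})\to\bar\0$. If $\0_t=0$ this is a Springer resolution $T^*(G/P)\to\bar\0$ and Lemma \ref{Q-fac} reduces us to counting its irreducible exceptional divisors: by Proposition \ref{component} this count is $\sum_{x'}\mtp(\rho_{(x',1)},\Ind_{W(L)}^W\varepsilon_{W(L)})$ over the codimension-two boundary orbits $\0_{x'}$, extracted from \cite{Alv} after computing $\Ind_{W(L)}^{W_0}$ (Littlewood--Richardson for classical $W_0$, \cite{Alv} otherwise) and locating the Springer correspondents of the $\0_{x'}$ in \cite{Car}. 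If $\0_t\ne 0$, one chooses $\0_t$ induced from a rigid orbit so that $\Pic(\0_t)$ is finite, resolves $\bar{\0_t}$ inside $l(\q)$, composes to a birational resolution, and repeats the argument (\cite{Nam} (Lemma 1.1.1) again bounding the relative Picard rank), the target number now being $b_2(G/Q)$. Carried out through every exceptional algebra, the count meets the bound for every orbit except the nine in $E_6$ listed in the statement, for which the contraction is strictly small: for $2A_1$ and $A_2+2A_1$ --- Richardson for maximal parabolics --- the orbit closure has no codimension-two boundary stratum, so the moment map contracts no divisor at all (for $\overline{2A_1}$ this is the small resolution $T^*(E_6/P)\to\overline{2A_1}$ with $E_6/P$ the Cayley plane), and there is an analogous shortfall, with $b_2$ equal to $2,3$ or $4$, for $A_3,A_4,A_4+A_1,D_5(a_1),D_5$ and, through the $\0_t\ne 0$ variant, for $A_2+A_1$ and $A_3+A_1$.

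The main obstacle is precisely this divisor count: for each Richardson orbit --- and each parabolic realizing it --- one must identify the $\pi$-relevant codimension-two boundary orbits, pin down their Springer correspondents, extract the multiplicities $\mtp(\rho_{(x',1)},\Ind_{W(L)}^W\varepsilon_{W(L)})$ from \cite{Alv}, and then detect, as the failure of the total to reach $b_2(G/P)$, the small contractions, which turn out to occur only in $E_6$. A secondary technical point is, in the $\0_t\ne 0$ cases, ensuring the auxiliary non-Springer resolution is controlled well enough that the rank of the divisor class group of $\tilde\0$ can be read off exactly as in Lemma \ref{Q-fac}.
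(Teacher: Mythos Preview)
Your overall strategy matches the paper's: reduce $\qit$-factoriality to finiteness of $\Pic(\0)$, dispose of most orbits via Lemma~\ref{Picard} by inspecting the reductive centralizer, and for the remaining ones carrying a torus factor use a birational contraction and count exceptional divisors against $b_2$. Your treatment of rigid orbits is a pleasant conceptual shortcut---the paper simply reads off Carter's tables, whereas you observe directly that a rigid orbit has semisimple reductive centralizer (though you should say ``central torus'' rather than ``torus'': a semisimple centralizer has plenty of tori).

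The substantive divergence is in the four induced orbits with no birational generalized Springer map. You propose the strengthened form of Lemma~\ref{Picard} (replace $\chi((G_x)^\circ)$ by its $A(\0_x)$-invariants, the transfer map showing $\Im(i^*)$ has finite index there) and then appeal to centralizer data. The paper instead constructs the Jacobson--Morozov resolution and locates, by explicit {\bf GAP4} computation of orbit dimensions, enough distinct exceptional divisors to match $b_2(G/P)$; for $A_2+A_1$ in $E_7$ it uses yet another indirect trick, passing through $D_5(a_1)$ in $E_8$. Your route is more uniform and avoids computer calculation, but it shifts the burden to checking that $A(\0)\simeq\zit/2$ really acts by inversion on the one-dimensional central torus of $(G_x)^\circ$ in each of the four cases---this is true and can be extracted from detailed centralizer tables, but it is not automatic and you have not actually carried it out.

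A smaller point: your handling of the $\0_t\neq 0$ case is muddled. You speak of resolving $\bar{\0_t}$ and composing, yet declare the target count to be $b_2(G/Q)$; if you genuinely resolve, the Picard rank jumps by $b_2$ of that resolution, and your bookkeeping is off. The paper avoids this by working directly with (the normalization of) $Z=G\times^Q(n(\q)+\bar{\0_t})$: since $\Pic(\0_t)$ is finite one has $b_2(Z)=b_2(G/Q)=1$ in the relevant cases, and Proposition~\ref{relevance} (applicable because $Z$ is smooth in codimension $3$) shows the unique codimension-two boundary orbit is $\pi$-relevant, so $\pi$ is divisorial and $\tilde\0$ is $\qit$-factorial. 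You should tighten your argument along these lines rather than pass to an auxiliary resolution.
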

\begin{proof}
By Lemma \ref{Picard}, we just need to check orbits whose type of
$C$ contains a factor of $T_i$ in the tables of \cite{Car}(Chap.
13, p.401-407). This gives that nilpotent orbit closures in $G_2$
and $F_4$ are $\qit$-factorial.

In $E_6$, there are in total ten orbits  to be considered. The
orbit closures of $2A_1, A_2+2A_1$ have small resolutions by
\cite{Nam2}, thus $\tilde{\0}$ is not $\qit$-factorial. As we will
see in section \ref{Mukai}, the orbit closures of $A_2+A_1,
A_3+A_1$ have small $\qit$-factorial terminalizations, thus
$\tilde{\0}$ is not $\qit$-factorial. The six left orbit closures
have Springer resolutions. We will now use Proposition
\ref{component} to calculate the numbers of irreducible
exceptional divisors and then apply Lemma \ref{Q-fac}. When
$\0=A_3$, a Springer resolution is given by the induction $(A_4,
0)$. The boundary $\bar{\0} \setminus \0 = \bar{\0}_{A_2+2A_1}$
has codimension 2 and $\rho_{(A_2+2A_1,1)} = \phi_{60,11} =
60_p^*$. By \cite{Alv} (p. 31), we get $\mtp = 1$ while
$b_2(G/P)=2$, thus $\tilde{\0}$ is not $\qit$-factorial. When
$\0=D_4(a_1)$, it is an even orbit and a Springer resolution is
given by the induction $(2A_2+A_1, 0)$ The boundary  $\bar{\0}
\setminus \0 = \bar{\0}_{A_3+A_1}$ has codimension 2. By
\cite{Alv} (p.33), we get $\mtp =1 = b_2(G/P)$. This implies that
$\tilde{\0}$ is $\qit$-factorial. For $\0=A_4$, a Springer
resolution is given by the induction $(A_3, 0)$ and $\bar{\0}
\setminus \0 = \bar{\0}_{D_4(a_1)}$ has codimension 2. We find
that $\mtp = 2$ while $b_2(G/P)=3$, thus $\tilde{\0}$ is not
$\qit$-factorial. For $\0=A_4+A_1$, a Springer resolution is given
by the induction $(A_2+2A_1, 0)$ and $\bar{\0} \setminus \0 =
\bar{\0}_{A_4}$ has codimension 2. By \cite{Alv}, we find $\mtp=1$
while $b_2(G/P)=2$, thus $\tilde{\0}$ is not $\qit$-factorial. For
$\0= D_5(a_1)$, a Springer resolution is given by the induction
$(A_2+A_1, 0)$ and $\bar{\0} \setminus \0 = \bar{\0}_{A_4+A_1}
\cup \bar{\0}_{D_4}$. Only $\bar{\0}_{A_4+A_1}$ has codimension 2.
By \cite{Alv}, we find $\mtp=2$ while $b_2(G/P)=3$, thus
$\tilde{\0}$ is not $\qit$-factorial. For $\0=D_5$, a Springer
resolution is given by the induction $(2A_1, 0)$ and $\bar{\0}
\setminus \0 = \bar{\0}_{E_6(a_3)}$ has codimension 2. By
\cite{Alv}, we find $\mtp=1$ while $b_2(G/P)=4$, thus $\tilde{\0}$
is not $\qit$-factorial.

In $E_7$, there are six orbits to be considered. We first consider
the two orbits: $A_4$ and $E_6(a_1)$.  For $\0=A_4$, a Springer
resolution is given by the induction $(A_1+D_4, 0)$ and $\bar{\0}
\setminus \0=\bar{\0}_{A_3+A_2}$ is of codimension 2. Using
\cite{Alv}, we find $\mtp = 2= b_2(G/P)$, thus $\tilde{\0}$ is
$\qit$-factorial. For $\0=E_6(a_1)$, a Springer resolution is
given by the induction $(4A_1, 0)$ and $\bar{\0} \setminus
\0=\bar{\0}_{E_7(a_4)}$ is of codimension 2. Using \cite{Alv}, we
find $\mtp = 3= b_2(G/P)$, thus $\tilde{\0}$ is $\qit$-factorial.

In $E_8$, there are seven orbits to be considered. We first
consider the three orbits: $D_5+A_2, D_7(a_2)$ and  $D_7(a_1)$.
For $\0=D_5+A_2$, a Springer resolution is given by the induction
$(A_2+A_4, 0)$ and
 and $\bar{\0} \setminus \0=\bar{\0}_{E_7(a_4)} \cup \bar{\0}_{A_6+A_1}$ is
of codimension 2.  As both orbits are special,  they are relevant,
so we get $\mtp = 2= b_2(G/P)$, thus $\tilde{\0}$ is
$\qit$-factorial. For $\0=D_7(a_2)$, a Springer resolution is
given by the induction $(2A_3, 0)$ and $\bar{\0} \setminus
\0=\bar{\0}_{D_5+A_2}$ is of codimension 2. Using \cite{Alv}, we
find $\mtp = 2= b_2(G/P)$, thus $\tilde{\0}$ is $\qit$-factorial.
For $\0=D_7(a_1)$,  a Springer resolution is given by the
induction $(A_2+A_3, 0)$ and $\bar{\0} \setminus
\0=\bar{\0}_{E_7(a_3)} \cup \bar{\0}_{E_8(b_6)}$ is of pure
codimension 2. Using \cite{Alv}, we find $\mtp = 3= b_2(G/P)$,
thus $\tilde{\0}$ is $\qit$-factorial.

Now we consider the following orbits:  $A_3+A_2, D_5(a_1)$ in
$E_7$ and $A_3+A_2$ in $E_8$. By the proof of Proposition
\ref{key}, $\bar{\0}$ admits a $\qit$-factorial terminalization
given by a generalized Springer map $\pi: Z:=G \times^P(n(\p) +
\bar{\0}_t) \to \bar{\0}$ with $b_2(G/P)=1$ and $\Pic(\0_t)\otimes
\qit = 0$. One checks easily that for such $\0$, $\bar{\0}
\setminus \0$ contains a unique codimension 2 orbit $\0_{x'}$. We
then use \cite{Alv} to check that $\mtp(\rho_{(x', 1)},
\Ind_{W(L)} \rho_{(t,1)}) \neq 0$. As the variety $Z$ is smooth
along $G \times^P(n(\p)+\0_t)$, one checks that $Z$ is smooth in
codimension 3. We can now apply Prop. \ref{relevance} to deduce
that the pre-image of $\0_{x'}$ under the generalized Springer map
is of codimension 1, thus the map is divisorial. As $b_2(Z)=1$,
this implies that $\tilde{\0}$ is $\qit$-factorial.

Now we consider the orbit: $A_2+A_1$ in $E_7$. By the proof of
Proposition \ref{key}, the induction $(E_7, A_2+A_1)$ of
$\0:=\0_{D_5(a_1)}$ in $E_8$ gives a birational map $Z:=G
\times^P(n(\p)+\bar{\0}_{A_2+A_1}) \xrightarrow{\pi} \bar{\0}.$ We
have $\bar{\0}\setminus \0 = \bar{\0}_{D_4+A_1} \cup
\bar{\0}_{A_4+A_1}$. Only the component $\bar{\0}_{A_4+A_1}$ is of
codimension 2 and one shows that $\pi$ is smooth over points in
$\0_{A_4+A_1}$. By applying the proof of Proposition
\ref{component}, we can show that the number of irreducible
exceptional divisors of $\pi$ is equal to the multiplicity
$\mtp(\rho_{(A_4+A_1, 1)}, \Ind_{W(E_7)}^{W(E_8)} \rho_{(A_2+A_1,
1)})$, which is 1 by \cite{Alv}.  We still denote by $\pi$ the map induced by the normalizations:
$\tilde{Z} \to \tilde{\0}$.
Let $E$ be the $\pi$-exceptional divisor.
Since $\pi$ is projective, one can take a
$\pi$-ample divisor $H$. As $\tilde{\0}$ is $\qit$-factorial, $\pi_*H$
is $\qit$-Cartier, so its pull-back $\pi^*(\pi_*H)$ is defined. One
can write $\pi^*(\pi_*H) - H = aE$ for some rational
number $a$. Since $\pi^*(\pi_*H)$ and $H$ are both $\qit$-Cartier,
$E$ is $\qit$-Cartier.  Take a Weil divisor $D$ of $\tilde{Z}$.
Then one can write $\pi^*(\pi_*D) - D = b E$ for some rational number $b$.
Since $\pi^*(\pi_*D)$ and $bE$ are both $\qit$-Cartier, $D$ is $\qit$-Cartier. This implies that
$\tilde{Z}$ is $\qit$-factorial. Thus
$\tilde{\0}_{A_2+A_1}$ is also $\qit$-factorial.

The claim for the remaining four orbits ($A_4+A_1$ in $E_7$,
$A_4+A_1, A_4+2A_1, E_6(a_1)+A_1$ in $E_8$) is proved by the
following Lemma.
\end{proof}

 For a nilpotent element $x \in \g$, the Jacobson-Morozov theorem gives
an $\mathfrak{sl}_2$-triplet $(x, y, h)$, i.e. $[h, x] = 2 x, [h,
y] = -2y, [x, y] = h$. This triplet makes $\g$ an
$\mathfrak{sl}_2$-module, so we have a decomposition $\g =
\oplus_{i \in \zit} \g_i$, where
 $\g_i = \{z \in \g \mid [h, z] = i z \}.$ The Jacobson-Morozov
parabolic sub-algebra of this triplet is $\p: = \oplus_{i \geq 0}
\g_i$, which can be assumed standard.  Let $P$ be the parabolic
subgroup of $G$ determined by $\p$, then its marked Dynkin diagram
is given by marking the non-zero nodes in the weighted Dynkin
diagram of $x$. The {\em Jacobson-Morozov resolution} of
$\bar{\0}_x$ is given by $\mu: Z:=G \times^P \n_2 \to \bar{\0}_x$,
where $\n_2 := \oplus_{i \geq 2} \g_i$ is a nilpotent ideal of
$\p$.
\begin{Lem}
Let $\0$ be one of the following orbits: $A_4+A_1$ in $E_7$,
$A_4+A_1, A_4+2A_1, E_6(a_1)+A_1$ in $E_8$. Then $\tilde{\0}$ is
$\qit$-factorial.
\end{Lem}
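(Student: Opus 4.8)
The plan is to exhibit, for each of the four orbits, a generalized Springer map whose source has $b_2$ equal to one and then to count exceptional divisors exactly as in the proof of Lemma \ref{Q-fac} and in the last paragraphs of the proof of Proposition \ref{Fact}. For each such $\0$ one has a unique induction (recorded in the proof of Proposition \ref{key}): $(E_6+A_1, A_1+0)$ for $A_4+A_1$ in $E_7$, $(E_6+A_1, A_1+0)$ for $A_4+A_1$ in $E_8$, $(D_7, 2^41^6)$ for $A_4+2A_1$ in $E_8$, and $(E_7, A_4+A_1)$ for $E_6(a_1)+A_1$ in $E_8$. In each case the Levi $L$ has semisimple rank one less than that of $G$, so $b_2(G/P)=1$; and the inducing orbit $\0_t$ on the Levi is either $0$ or (for the last case) the orbit $A_4+A_1$ in $E_7$, whose closure is already known (Proposition \ref{Fact}, treated just above) to have $\qit$-factorial normalization with $\Pic(\0_t)\otimes\qit=0$. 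By Lemma \ref{Picard}, $\Pic(\0)$ is finite in all four cases, since these orbits are exactly the ones whose component-group data leaves no toral factor once $\Pic(\0_t)$ is killed.

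Next I would argue that the source $Z:=G\times^P(\n(\p)+\bar\0_t)$ is smooth in codimension (at least) $3$: it is smooth along the open stratum $G\times^P(\n(\p)+\0_t)$, and one checks on the fibres over the relevant boundary orbits that the singular locus of $Z$ has codimension $\geq 4$, so Proposition \ref{relevance} applies to every codimension-$2$ orbit in the boundary. Then I would identify which boundary orbits of $\bar\0$ have codimension $2$ — typically a single one, read off from the Hasse diagram of the closure order in \cite{Car} — and verify, using the tables of \cite{Alv}, that $\mtp(\rho_{(x',1)},\Ind_{W(L)}^{W}\rho_{(t,1)})\neq 0$ for each such $\0_{x'}$. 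By Proposition \ref{relevance} the preimage of each such orbit is divisorial, and by the component-count argument in the proof of Proposition \ref{component} the number of irreducible exceptional divisors of $\pi$ (or $\mu$) is exactly the sum of these multiplicities, which equals $1=b_2(G/P)$. Plugging into the exact sequence in the proof of Lemma \ref{Q-fac} gives $\Pic(Z)\otimes\qit\twoheadrightarrow\Pic(\0)\otimes\qit$ with the image of the divisor classes spanning everything, hence $\Pic(\0)\otimes\qit=0$; combined with finiteness of $\Pic(\0)$ this yields that $\tilde\0$ is $\qit$-factorial.

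For the $E_6(a_1)+A_1$ case, where $\0_t$ is the nontrivial orbit $A_4+A_1$ in $E_7$ rather than $0$, the bookkeeping is slightly heavier: one must use the transitivity $\Ind_{W(L)}^{W}=\Ind_{W(E_7)}^{W(E_8)}\circ\Ind_{W(L)}^{W(E_7)}$ and feed in $\rho_{(t,1)}=\phi_{512,11}=512_a^*$ as computed in the proof of Proposition \ref{key}, and one must know that $\Pic(\0_t)\otimes\qit=0$, which is exactly the content of the $A_4+A_1$-in-$E_7$ line just handled; the argument is thus inductive in the same spirit as the treatment of $A_2+A_1$ in $E_7$ via $D_5(a_1)$ in $E_8$. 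I expect the main obstacle to be the second step — confirming that $Z$ is smooth in codimension $3$ (so that Proposition \ref{relevance} is legitimately applicable at every relevant boundary stratum), and correctly enumerating the codimension-$2$ boundary orbits — rather than the multiplicity computations themselves, which are pure table lookups in \cite{Alv} and \cite{Car}. If for some orbit the Jacobson--Morozov / Springer source turns out to be more singular than hoped, the fallback is to use the explicit Jacobson--Morozov resolution $\mu:G\times^P\n_2\to\bar\0_x$ described just before the lemma (with $\0_t=0$), where $Z$ is smooth, making Proposition \ref{component} directly available with no rational-smoothness caveat.
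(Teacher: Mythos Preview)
Your main approach has a fatal gap for three of the four orbits. Proposition \ref{key} is precisely the statement that $A_4+A_1$ in $E_7$ and $A_4+A_1,\,A_4+2A_1$ in $E_8$ admit \emph{no} birational generalized Springer map: every induction listed there (including the ones you quote, $(E_6+A_1,A_1+0)$ and $(D_7,2^41^6)$) has degree $2$. The exact-sequence argument of Lemma \ref{Q-fac} compares $\Pic(Z)$ with $\Pic(\0)$ via a birational restriction; for a degree-$2$ map $\pi^{-1}(\0)\to\0$ is a double cover, and you get no direct control on $\Pic(\0)$. (There is also a slip: $E_6+A_1$ is not a Levi subalgebra of $E_7$, so the induction you name for $A_4+A_1$ in $E_7$ does not exist.) Finally, Lemma \ref{Picard} does \emph{not} apply here: these four orbits are exactly among those with a toral factor $T_i$ in the reductive centralizer, which is why they were left over in the proof of Proposition \ref{Fact} in the first place.

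Your fallback---the Jacobson--Morozov resolution $\mu:G\times^P\n_2\to\bar\0$---is what the paper actually uses for the first three orbits, but you underestimate the difficulty. This $\mu$ is \emph{not} a generalized Springer map in the sense of the paper (the fibre $\n_2$ is not of the form $n(\q)+\bar\0_t$ unless the orbit is even), so neither Proposition \ref{relevance} nor Proposition \ref{component} applies, and the Alvis tables give no multiplicity with which to count exceptional divisors. The paper instead proceeds by brute force: for each orbit one writes down explicit elements $x_i\in\n_2$ (found with \textbf{GAP4}), shows that each $E_i:=G\times^P\overline{P\cdot x_i}$ is a divisor, and proves the $E_i$ are pairwise distinct by factoring $\mu$ through a chain of intermediate parabolics and tracking which $E_i$ is contracted at each stage. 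The count $b_2(G/P)$ is $3$, $3$, $2$ in the three cases (not $1$), and one exhibits that many distinct exceptional divisors. Only for $E_6(a_1)+A_1$ does the paper use your intended method---the birational generalized Springer map from $(E_7,A_4+A_1)$ with $b_2=1$ and a single divisorial exception over $\0_{D_7(a_2)}$---bootstrapping from the just-proved $\qit$-factoriality of $\tilde\0_{A_4+A_1}$ in $E_7$; for that last orbit your outline is essentially correct.
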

\begin{proof}
We will consider the Jacobson-Morozov resolution $\mu: G \times^P
\n_2  \to \bar{\0}$. By \cite{Nam} (Lemma 1.1.1), $\tilde{\0}$ is
$\qit$-factorial if the number of $\mu$-exceptional irreducible
divisors is equal to $b_2(G/P)$.  To find $\mu$-exceptional
divisors, we will use the computer algebra system {\bf GAP4} to
compute the dimension of the orbit $P \cdot z$ for $z \in \n_2$
(which is the same as $\dim [\p,z]$). We denote by $\beta_j$ the
root vector corresponding to the $j$-th positive root of $\g$ as
present in {\bf GAP4} (see \cite{dG} Appendix B).

Consider first the orbit $\0:=\0_{A_4+A_1}$ in $E_7$. Its
Jacobson-Morozov parabolic subgroup $P$ is given by marking the
nodes $\alpha_1, \alpha_4, \alpha_6$(in Bourbaki's ordering). Let
$Q_1$ (resp. $Q_2$) be the parabolic subgroup given by marking the
nodes $\alpha_1,
 \alpha_6$ (resp.  $\alpha_6$). We have $P \subset Q_1 \subset
 Q_2$. Let $Z_i:=G \times^{Q_i} (Q_i \cdot \n_2)$ and
 $\tilde{Z}_i$ its normalization. The Jacobson-Morozov resolution $\mu$
 factorizes through three contractions:
 $$ Z \xrightarrow{\mu_1} \tilde{Z}_1 \xrightarrow{\mu_2} \tilde{Z}_2
 \xrightarrow{\mu_3} \tilde{\0}.$$
We consider the following three elements in $\n_2$:
$x_1:=\beta_{20} + \beta_{21} + \beta_{25}+\beta_{29},$
$x_2:=\beta_{21}+\beta_{25}+\beta_{26}+\beta_{27}+\beta_{28}+\beta_{29}+\beta_{47},$
$x_3:=\beta_{20}+\beta_{21}+\beta_{28}+\beta_{29}+\beta_{30}+\beta_{31}$.
Let $E_i:=G \times^P \overline{P \cdot x_i}$. Using {\bf GAP4}, we
find $\dim (E_i)= \dim(G/P)+ \dim(P \cdot x_i)=103$, thus $E_i$
are irreducible divisors in $Z$. By calculating the dimensions of
$Q_i \cdot x_j$ using {\bf GAP4}, we get that $\mu_1$ contracts
$E_1$ while $\mu_1(E_2)$ and $\mu_1(E_3)$ are again divisors. The
divisor $\mu_1(E_2)$ is contracted by $\mu_2$ while
$\mu_2(\mu_1(E_3))$ is again a divisor, which is contracted by
$\mu_3$. This shows that the three $\mu$-exceptional divisors
$E_i, i=1,2,3$ are distinct, thus $\tilde{\0}$ is
$\qit$-factorial.  Using the program in \cite{dG}, we find
$\mu(E_1)=\bar{\0}_{A_4}$ and
$\mu(E_2)=\mu(E_3)=\bar{\0}_{A_3+A_2+A_1}$.

For the orbit $A_4+A_1$ in $E_8$, its Jacobson-Morozov parabolic
subgroup $P$ is given by marking the nodes $\alpha_1, \alpha_6,
\alpha_8$. Let $Q_1$ (resp. $Q_2$) be the parabolic subgroup given
by marking the nodes $\alpha_1,  \alpha_8$ (resp. $\alpha_8$).As
before, we define $\tilde{Z}_i$ and $\mu_i$. We consider the
following three elements in $\n_2$:
$x_1:=\beta_{42}+\beta_{57}+\beta_{53}+\beta_{43}$,
$x_2:=\beta_{29}+\beta_{45}+\beta_{56}+\beta_{57}+\beta_{58}+\beta_{59}$,
$x_3:=\beta_{57}+\beta_{56}+\beta_{59}+\beta_{54}+\beta_{61}+\beta_{45}+\beta_{58}$.
We define $E_i$ as before and by using {\bf GAP4} we find that
$E_i, i=1,2,3$ are divisors in $Z$. The map $\mu_1$ contracts
$E_1$, the map $\mu_2$ contracts the divisor $\mu_1(E_2)$ and the
map $\mu_3$ contracts the divisor $\mu_2(\mu_1(E_3))$. This shows
that $E_i, i=1,2,3$ are distinct, thus $\tilde{\0}$ is
$\qit$-factorial. We have furthermore
$\mu(E_1)=\mu(E_2)=\bar{\0}_{A_4}$ and
$\mu(E_3)=\bar{\0}_{D_4(a_1)+A_2}$.

For the orbit $A_4+2A_1$ in $E_8$, its Jacobson-Morozov parabolic
subgroup $P$ is given by marking the nodes $\alpha_4, \alpha_8$.
Let $Q_1$ be the parabolic subgroup given by marking the nodes
$\alpha_8$. We define similarly $\mu_i$. We consider the following
elements in $\n_2$:
$x_1:=\beta_{42}+\beta_{57}+\beta_{53}+\beta_{43}+\beta_{61}$,
$x_2:=\beta_{32}+\beta_{42}+\beta_{47}+\beta_{53}+\beta_{57}+\beta_{61}$.
As before, we define $E_i, i=1,2$, which are divisors by
calculating in {\bf GAP4}. The map $\mu_1$ contracts $E_1$ and the
map $\mu_2$ contracts the divisor $\mu_1(E_2)$, thus $E_1 \neq
E_2$ and $\tilde{\0}$ is $\qit$-factorial. We have furthermore
$\mu(E_1) = \bar{\0}_{A_4+A_1}$ and $\mu(E_2)=\bar{\0}_{2A_3}$.

The orbit $\0:=E_6(a_1)+A_1$ is induced from $(E_7, A_4+A_1)$. The
generalized Springer map $Z:=G \times^P(n(\p)+\bar{\0}_{A_4+A_1})
\xrightarrow{\pi} \bar{\0}$ is birational.  We have
$\bar{\0}\setminus \0 = \bar{\0}_{E_6(a_1)} \cup
\bar{\0}_{D_7(a_2)}$. Only the component $\bar{\0}_{D_7(a_2)}$ is
of codimension 2 and one shows that $\pi$ is smooth over points in
$\0_{D_7(a_2)}$. By applying the proof of Proposition
\ref{component}, we can show that the number of irreducible
exceptional divisors of $\pi$ is equal to the multiplicity
$\mtp(\rho_{(A_4+A_1, 1)}, \Ind_{W(E_7)}^{W(E_8)} \rho_{(A_2+A_1,
1)})$, which is 1 by \cite{Alv}. On the other hand, we have just
proved the $\qit$-factoriality of $\tilde{\0}_{A_4+A_1}$, thus
$\Pic(\0_{A_4+A_1})$ is finite. This gives that $b_2(G
\times^P(n(\p)+\0_{A_4+A_1}))=1$ and $\pi$ contains an exceptional
divisor, thus $\tilde{\0}$ is $\qit$-factorial.
\end{proof}

\section{Rigid orbits}

The aim of this section is to prove Conjecture \ref{conj} for
rigid orbits. The classification of rigid orbits can be found for
example in \cite{McG} (Section 5.7).
\begin{Thm}\label{rigid}
Let $\0$ be a rigid nilpotent orbit in a complex simple Lie
algebra $\g$. Then $\tilde{\0}$ is $\qit$-factorial terminal.
\end{Thm}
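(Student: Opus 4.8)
The plan is to prove the two assertions—$\qit$-factoriality and terminality of $\tilde\0$—separately, using different mechanisms, and then to reduce the whole statement to a finite case-check in the exceptional types combined with Namikawa's already-known classical result.

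For $\qit$-factoriality the natural route is Lemma \ref{Picard}: it suffices to show that for a rigid nilpotent $x$ the identity component $(G_x)^\circ$ of the stabilizer has finite character group $\chi((G_x)^\circ)$, equivalently that its reductive part has no central torus, i.e. the Lie algebra $\mathfrak{g}_x$ of the centralizer is semisimple plus nilpotent with no central one-dimensional torus factor. The key input is the following structural fact, due to Kraft--Procesi/Borho for the classical algebras and verified on the tables of \cite{Car} (Chap.~13) for the exceptional ones: the "type of $C$" (the reductive part of the centralizer) contains a torus factor $T_i$ precisely for induced orbits; rigid orbits are exactly those with a semisimple (or trivial) reductive centralizer part in the relevant tables. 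So first I would state this as the crucial lemma and point to the tables: in $G_2,F_4,E_6,E_7,E_8$ one reads off directly that every rigid orbit has centralizer type with no $T_i$-factor, hence $\chi((G_x)^\circ)$ finite, hence by Lemma \ref{Picard} $\bar\0_x$ is $\qit$-factorial; and by normality of $\bar\0$ for rigid orbits (Kraft--Procesi, and automatic in the exceptional cases by Broer/Sommers) we get $\tilde\0=\bar\0$ is $\qit$-factorial. For the classical $\g$ this is contained in \cite{Nam}, and for exceptional $\g$ it is a finite check against \cite{Car}.

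For terminality the argument is different and here lies the main obstacle. Since $\bar\0$ has symplectic (hence rational Gorenstein, canonical) singularities, terminality is equivalent to the statement that $\bar\0$ admits no crepant divisor over it—equivalently, no generalized Springer map $\pi: G\times^Q(n(\q)+\bar\0_t)\to\bar\0$ has an exceptional divisor. By the theory recalled in Section 2 (Propositions \ref{degree}, \ref{relevance}), an exceptional divisor of such a $\pi$ forces a $\pi$-relevant codimension-$2$ orbit $\0_{x'}\subset\bar\0\setminus\0$, and more importantly: if $\bar\0$ is rigid then it is \emph{not} itself the image of any generalized Springer map with positive-dimensional fibers that could produce a crepant divisor—the point being that a crepant divisorial contraction onto $\bar\0$ would, by the Lusztig--Spaltenstein induction, exhibit $\0$ as induced, contradicting rigidity. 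I would make this precise as follows: suppose $p: Y\to\tilde\0$ is a terminalization-type map with an exceptional divisor $E$; one analyzes the generic point of $p(E)$ and uses Namikawa's description of the local structure (the codimension-$2$ boundary orbit gives a transverse slice which is a Kleinian or minimal nilpotent singularity), and shows the only way to extract a crepant divisor is an induction datum. Combined with the dimension formula $\codim(\bar\0\setminus\0)\ge 2$ and the multiplicity computations of \cite{Alv}, this rules out crepant divisors. The cleanest packaging: show directly that $\tilde\0$ is smooth in codimension $1$ (automatic, since $\0$ is smooth and the boundary has codimension $\ge 2$) and that the singularities along every codimension-$2$ boundary orbit are of $\mathrm{ADE}$ or minimal-orbit type—which are terminal in the appropriate sense after normalization—so no exceptional divisor can be crepant; finitely many codimension-$2$ slices need to be identified, one per rigid orbit, again via the tables of \cite{McG} (Section 5.7) and \cite{Car}.

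The hard part, then, is the terminality direction, and specifically controlling the transverse singularities of $\bar\0$ along its codimension-$2$ boundary orbits for the (finitely many) rigid orbits in $E_7,E_8$: one must check in each case that this slice singularity does not force a crepant divisor, which reduces to a multiplicity being zero in the sense of Proposition \ref{relevance}. I expect to organize this as a table: list all rigid orbits in each exceptional type, list the codimension-$2$ boundary orbits $\0_{x'}$, and verify $\mtp(\rho_{(x',1)},\Ind_{W(L)}^W\rho_{(t,1)})=0$ for every induction datum $(L,\0_t)$ by appealing to \cite{Alv}. For the classical algebras the whole theorem is already in \cite{Nam}, so the new content is exactly this exceptional case-check; the $G_2$ and $F_4$ cases are immediate from the $\qit$-factoriality step plus the smallness of the boundary, and the bulk of the work is $E_7$ and $E_8$.
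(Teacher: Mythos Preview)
Your $\qit$-factoriality argument is essentially correct and coincides with what underlies the paper's route via Proposition~\ref{Fact}: rigid implies the reductive part of $G_x$ has no central torus, hence Lemma~\ref{Picard} applies. (Your aside that $\bar\0$ is normal for rigid orbits is false---the paper's own corollary to this theorem exhibits several non-normal rigid closures---but you do not actually need normality, since Lemma~\ref{Picard} already yields finiteness of $\Pic(\0)$, and $\tilde\0\setminus\0$ still has codimension $\ge 2$.)

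The terminality argument, however, has a genuine gap. Your key claim, that the transverse slices along codimension-$2$ boundary orbits are ``of ADE or minimal-orbit type---which are terminal in the appropriate sense after normalization,'' is wrong as stated: Kleinian surface singularities are already normal, and they are canonical but \emph{not} terminal (they admit crepant resolutions). So if the transverse slice were genuinely ADE, $\tilde\0$ would fail to be terminal. Your alternative plan, to check $\mtp(\rho_{(x',1)},\Ind_{W(L)}^W\rho_{(t,1)})=0$ for every induction datum $(L,\0_t)$, is vacuous for a rigid orbit: by definition there is no such datum, and Proposition~\ref{relevance} has nothing to act on. Likewise, the heuristic ``a crepant exceptional divisor would force $\0$ to be induced'' is precisely the content of Conjecture~\ref{conj} restricted to rigid orbits, so invoking it here is circular. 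Finally, $G_2$ and $F_4$ are not ``immediate'': the orbits $\tilde A_1$ in $G_2$ and $\tilde A_2+A_1$ in $F_4$ each have a codimension-$2$ boundary orbit and belong to the hard cases.

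The paper's proof is organized quite differently. One first observes that a symplectic variety is terminal once its singular locus has codimension $\ge 4$; a direct check of the tables in \cite{McG} shows that $\codim(\bar\0\setminus\0)\ge 4$ for every rigid exceptional orbit except six: $\tilde A_1$ in $G_2$, $\tilde A_2+A_1$ in $F_4$, $(A_3+A_1)'$ in $E_7$, and $A_3+A_1$, $A_5+A_1$, $D_5(a_1)+A_2$ in $E_8$. For these six one takes the Jacobson--Morozov resolution $\mu:Z=G\times^P\n_2\to\bar\0$ and computes the discrepancies directly. In the $E_8$ cases the line bundle $K_{G/P}^{-1}\otimes\det(G\times^P\n_2)$ is shown to be ample (its character is $\wedge^{\mathrm{top}}\g_{-1}$, computed explicitly), forcing all discrepancies positive. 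For $\tilde A_1$ in $G_2$ one uses $b_2(G/P)=1$ and the absence of a crepant resolution. For the two remaining cases this ampleness fails, and one instead locates, via explicit elements computed in \textbf{GAP4}, the $\mu$-exceptional divisors $E_i$ and exhibits nonzero vectors in the kernel $\Kr_{x_i}=\{u\in\g_{-1}:[x_i,u]\in\n_2\}$ of Panyushev's $2$-form $\Omega$ at a generic point $x_i\in E_i$; this shows $K_Z=\sum a_iE_i$ with $a_i>0$. The upshot, recorded in the subsequent corollary, is that for these six orbits $\tilde\0$ is in fact smooth in codimension~$3$---the codimension-$2$ singularities of $\bar\0$ are non-normal and disappear upon normalization---which is the mechanism your proposal gestures at but does not establish.
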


\begin{proof}
When $\g$ is classical, this is proven in \cite{Nam}. From now on,
we assume that $\g$ is exceptional. The $\qit$-factoriality of
$\tilde{\0}$ is a direct consequence of Proposition \ref{Fact}. As
$\tilde{\0}$ is symplectic,  it has terminal singularities if
$\codim_{\bar{\0}}(\bar{\0}\setminus \0) \geq 4$. Using the tables
in \cite{McG} (section 5.7, 6.4), we calculate the codimension of
$\bar{\0}\setminus \0$ and  it follows that every rigid orbit
satisfies $\codim_{\bar{\0}}(\bar{\0}\setminus \0) \geq 4$ except
the following orbits: $\tilde{A_1}$ in $G_2$, $\tilde{A_2}+A_1$ in
$F_4$, $(A_3+A_1)'$ in $E_7$, $A_3+A_1, A_5+A_1, D_5(a_1)+A_2$ in
$E_8$.

Consider first the orbit $\0:=\tilde{A_1}$ in $G_2$. Its
Jacobson-Morozov parabolic subgroup is given by marking the node
$\alpha_1$(in Bourbaki's ordering). Consider the Jacobson-Morozov
resolution $ Z:= G \times^P \n_2 \xrightarrow{\mu} \bar{\0}$.  By
\cite{F1}, $\bar{\0}$ has no crepant resolution, thus $\mu$ is not
small. As $b_2(G/P)=1$, there exists one unique $\mu$-exceptional
irreducible divisor $E$. The canonical divisor $K_Z$ is then given
by $K_Z=a E$ with $a>0$. This implies that $\tilde{\0}$ has
terminal singularities. This fact is already known in \cite{Kra}
by a different method.

We now consider the three orbits in $E_8$. Let $ Z:=G \times^P
\n_2  \xrightarrow{\mu} \bar{\0}$ be the Jacobson-Morozov
resolution and $p: Z \to G/P$ the natural projection. Let
$\omega_1, \cdots, \omega_8$ be the fundamental weights of $E_8$.
The Picard group $\Pic(G/P)$ is generated by $\omega_i$  s.t.
$\alpha_i$ is a marked node of $P$. The canonical bundle of $Z$ is
given by $K_Z=p^*(K_{G/P} \otimes det(G \times^P \n_2^*))$. Let
$\cup_j E_j$ be the exceptional locus of $\mu$, which is of pure
codimension 1 since $\tilde{\0}$ is $\qit$-factorial. We have
$K_Z=\sum_j a_j E_j$ with $a_j \geq 0$. Note that if we can show
$K_{G/P}^{-1} \otimes det(G \times^P \n_2)$ is ample on $G/P$,
then $a_j>0$ for all $j$(since $p$ does not contract any
$\mu$-exceptional curve), which will prove that $\tilde{\0}$ has
terminal singularities. As $T^*(G/P) \simeq G \times^P (\oplus_{k
\leq -1} \g_k)$, the line bundle $K_{G/P}^{-1} \otimes det(G
\times^P \n_2)$ corresponds to the character $\wedge^{top}
(\oplus_{k \leq -1} \g_k) \otimes \wedge^{top} (\oplus_{k \geq 2}
\g_k) \simeq \wedge^{top} \g_{-1}$ of $P$. An explicit basis of
$\g_{-1}$ and the action of a Cartan subalgebra $\h$ on it can be
computed using {\bf GAP4}. For the orbit $A_3+A_1$, we get that
$K_Z = p^*(-13 \omega_6-3\omega_8)$. For the orbit $A_5+A_1$, we
get $K_Z = p^*(-3\omega_1-7\omega_4-5\omega_8)$. For the orbit
$D_5(a_1)+A_2$, we get $K_Z=p^*(-7\omega_3-6\omega_6-3\omega_8)$.
This proves the claim for these three orbits.

In a similar way, for the orbit $\tilde{A_2}+A_1$ in $F_4$, we
find that $K_Z=p^*(3 \omega_4-2 \omega_2)$ and for the orbit
$(A_3+A_1)'$ in $E_7$, we obtain $K_Z=p^*(5 \omega_1 -3
\omega_4)$, thus the precedent argument does not apply here.
Instead, we will use another approach.  Recall (\cite{Pan}) that
there exists a 2-form $\Omega$ on $Z:= G \times^P \n_2$ which is
defined at a point $(g,x)\in G \times \n_2$ by:
$\Omega_{(g,x)}((u,m), (u',m')) = \kappa ([u, u'], x) + \kappa
(m', u) -\kappa(m, u')$, where $\kappa( \cdot , \cdot)$ is the
Killing form. The tangent space of $Z$ at the point $(g, x)$ is
identified with the quotient $$\g \times \n_2 /\{ (u, [x,u]) | u
\in \oplus_{i \geq 0} \g_i\}.$$ By Lemma 4.3 in \cite{B},  The
kernel of $\Omega_{(g,x)}$ consists of images of elements $(u,
[x,u])$ with $u \in \oplus_{i \geq -1} \g_i$ such that $[x, u] \in
\n_2$. This shows that $\Omega_{(g,x)}$ is non-degenerate if and
only if the set $\Kr_x:=\{u \in \g_{-1}| [x, u] \in \n_2 \} $ is
reduced to $\{0\}$. Let $s:=\wedge^{top} \Omega$, then $K_Z =
div(s)$ and $s((g,x)) \neq 0$ if and only if $\Kr_x = \{0\}$. To
prove our claim, we just need to show that for a generic point $x$
in every $\mu$-exceptional divisor, the section $s$ vanishes at
$x$, i.e. to show that $\Kr_x \neq \{0\}$.

For the orbit $\tilde{A_2}+A_1$ in $F_4$, we consider the two
elements in $\n_2$:  $x_1:=\beta_{11}+\beta_{12}$ and
$x_2:=\beta_{14}+\beta_{15}+\beta_{16}$. Define $E_i:=G \times^P
\overline{P \cdot x_i}, i=1,2$. Using {\bf GAP4}, we find that
$E_1$ and  $E_2$ are of codimension 1 in $Z$. We have $\mu(E_1) =
\bar{\0}_{\tilde{A_2}}$ and $\mu(E_2) =
\bar{\0}_{A_2+\tilde{A_1}}$, which shows that the two divisors are
distinct. As $b_2(G/P)=2$, we get $\Exc(\mu)=E_1 \cup E_2$.
Consider the two elements in $\g_{-1}$:  $u_1:=\beta_{28}$ and
$u_2:=\beta_{25}-2 \beta_{28}$. Then we have $[x_1, u_1]=0$ and
$[x_2, u_2]=\beta_{12} \in \n_2$, which proves that $u_1 \in
\Kr_{x_1}$ and $u_2 \in \Kr_{x_2}$.
 From this we get that $K_Z = a_1 E_1 + a_2 E_2$ with $a_i >0, i=1,2.$

For the orbit $(A_3+A_1)'$ in $E_7$, we consider the two elements
in $\n_2$: $x_1:=\beta_{20}+\beta_{21}+\beta_{49}$ and
$x_2:=\beta_{20}+\beta_{34}+\beta_{35}+\beta_{37}+\beta_{43}+\beta_{45}$.
We define in a similar way $E_1, E_2$ which are divisors by a
calculus in {\bf GAP4}. We have $\mu(E_1)=\bar{\0}_{A_3}$ and
$\mu(E_2)=\bar{\0}_{2A_2+A_1}$, thus $\Exc(\mu)=E_1 \cup E_2$.
Consider the two elements in $\g_{-1}$:  $u_1:=\beta_{67}$ and
$u_2:=\beta_{64}- \beta_{79}-\beta_{81}$. Then we have $[x_1,
u_1]=0$ and $[x_2, u_2]=-\beta_{26}-\beta_{27}+\beta_{40} \in
\n_2$, which proves that $u_1 \in \Kr_{x_1}$ and $u_2 \in
\Kr_{x_2}$. We deduce that $K_Z=a_1 E_1+a_2E_2$ with $a_i>0,
i=1,2,$ which concludes the proof.
\end{proof}
\begin{Rque}
The three orbits in $E_8$ can also be dealt with in the same way.
 Thus in this paper, the essential point where we used {\bf GAP4}  is to compute
the dimension of $[\p, x]$ (surely we have used it in a crucial
way to find the elements $x_i$ in $\n_2$ and $u_i$ in $\g_{-1}$).
\end{Rque}

\begin{Cor}
The normalization $\tilde{\0}$ is smooth in codimension 3 for the
following orbits: $\tilde{A_1}$ in $G_2$, $\tilde{A_2}+A_1$ in
$F_4$, $(A_3+A_1)'$ in $E_7$, $A_3+A_1, A_5+A_1, D_5(a_1)+A_2$ in
$E_8$. In particular, the closure $\bar{\0}$ of any of these
orbits is non-normal.
\end{Cor}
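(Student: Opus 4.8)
The plan is to combine two facts already established in the proof of Theorem \ref{rigid}. First, for each of the six listed orbits $\0$, the proof exhibits a Jacobson--Morozov resolution $\mu: Z = G\times^P \n_2 \to \bar\0$ together with explicit $\mu$-exceptional divisors $E_i$ and the identification of their images $\mu(E_i)$ as the closures of certain smaller orbits (e.g. $\bar\0_{\tilde A_2}$ and $\bar\0_{A_2+\tilde A_1}$ for $\tilde A_2+A_1$ in $F_4$; $\bar\0_{A_3}$ and $\bar\0_{2A_2+A_1}$ for $(A_3+A_1)'$ in $E_7$; and analogously for the three $E_8$ orbits via the character computations). Since $\Exc(\mu)$ is exactly the union of these $E_i$ (this is forced because $b_2(G/P)$ equals the number of exceptional divisors found, as used in the proof), the union $\bigcup_i \mu(E_i)$ is precisely the non-isomorphism locus of $\mu$ inside $\bar\0$. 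Second, the classification of rigid orbits in \cite{McG} (sections 5.7, 6.4), which was used at the start of the proof of Theorem \ref{rigid}, records for each such orbit the dimensions of all orbits in its closure; from there one reads off that $\bar\0\setminus\0$ has codimension exactly $2$ for these six orbits (they are precisely the rigid orbits excluded from the codimension-$\geq 4$ list), and moreover that the codimension-$2$ part of the boundary consists exactly of the orbits $\mu(E_i)$ identified above.

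The key steps, in order: (1) For each of the six orbits, record from the tables in \cite{McG} the list of orbits $\0_{x'}\subsetneq\0$ together with $\dim\0_{x'}$, and note that only the orbits $\0'$ with $\codim_{\bar\0}\bar\0_{\0'}=2$ need attention, while all deeper strata have codimension $\geq 4$ by the computation already performed in the proof of Theorem \ref{rigid}. (2) Match these codimension-$2$ boundary orbits with the images $\mu(E_i)$ computed in the proof of Theorem \ref{rigid}; since every codimension-$2$ boundary orbit arises as some $\mu(E_i)$, the map $\mu$ is an isomorphism over $\bar\0$ away from the closed set $\bigcup_i \mu(E_i)$, whose complement in $\bar\0$ is exactly $\0$ together with all boundary orbits of codimension $\geq 4$. (3) Since $Z$ is smooth and $\mu$ is an isomorphism over this complement, $\tilde\0$ is smooth there; the locus where $\tilde\0$ could be singular is therefore contained in the preimage of strata of codimension $\geq 4$, so $\tilde\0$ is smooth in codimension $3$. (4) Finally, a normal variety which is Cohen--Macaulay (here $\tilde\0$ is even symplectic, hence has rational Gorenstein singularities) and whose singular locus has codimension $\geq 4$, but whose \emph{closure} $\bar\0$ is singular in codimension $2$ (because $\bar\0\setminus\0$ has codimension $2$ and $\bar\0$ is singular along it — otherwise $\bar\0$ would be smooth, hence normal, along a codimension-$2$ stratum, contradicting that $\bar\0\setminus\0$ is contained in the singular locus of $\bar\0$ for a non-smooth orbit closure), forces $\bar\0\neq\tilde\0$, i.e. $\bar\0$ is non-normal.

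The main obstacle I expect is step (2): one must be sure that the explicitly chosen elements $x_i\in\n_2$ in the proof of Theorem \ref{rigid} produce divisors whose images account for \emph{all} codimension-$2$ boundary orbits, not just some of them. This is really a bookkeeping matter — for each orbit one has $b_2(G/P)$ many independent exceptional divisors and one checks their images are distinct and exhaust the codimension-$2$ part of $\bar\0\setminus\0$ — but it does require cross-referencing the {\bf GAP4} output in the proof of Theorem \ref{rigid} with the closure-order tables in \cite{McG}. Once that matching is confirmed, steps (1), (3), (4) are immediate. An alternative, cleaner route for step (3) avoids pinning down the images altogether: it suffices to observe that over the codimension-$2$ boundary stratum $\mu$ has positive-dimensional fibers only along the $\mu$-exceptional divisors, so $\mu$ is finite — hence, by Zariski's main theorem and normality, an isomorphism — over a dense open subset of each such stratum of codimension $3$, which already gives smoothness of $\tilde\0$ in codimension $3$; the non-normality of $\bar\0$ then follows as in step (4) since $\bar\0\setminus\0$ has codimension $2$.
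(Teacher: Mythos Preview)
Your argument for ``smooth in codimension $3$'' has a real gap. In step (2) you assert that the complement of $\bigcup_i\mu(E_i)$ in $\bar\0$ consists of $\0$ together with the codimension-$\geq 4$ strata. But each $\mu(E_i)$ is the \emph{closure} of a codimension-$2$ orbit (this is how the proof of Theorem \ref{rigid} records them, e.g.\ $\mu(E_1)=\bar\0_{\tilde A_2}$, and it must be so since $E_i$ is closed and $\mu$ is proper). For these six orbits $\bar\0\setminus\0$ has pure codimension $2$, so the closures of the codim-$2$ boundary orbits exhaust all of $\bar\0\setminus\0$; hence $\bar\0\setminus\bigcup_i\mu(E_i)=\0$, and step (3) only recovers that $\tilde\0$ is smooth over $\0$, which is automatic. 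Your alternative route has the same defect: over a generic point of a codim-$2$ boundary orbit the fibre of $\mu$ is one-dimensional (a codim-$1$ divisor maps onto a codim-$2$ image), so $\mu$ is not finite there and Zariski's Main Theorem does not apply. Note also that for the three $E_8$ orbits the proof of Theorem \ref{rigid} never identifies the $\mu(E_i)$; it only checks ampleness of $K_{G/P}^{-1}\otimes\det(G\times^P\n_2)$, so your matching in step (2) cannot even be attempted there.

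The paper states this as a corollary because it follows in one line from a standard fact you do not invoke: a symplectic variety has terminal singularities if and only if it is smooth in codimension $3$. One direction is used explicitly in the proof of Theorem \ref{rigid}; the converse holds because a codim-$2$ singular stratum in a symplectic variety carries a transverse Du Val singularity, whose crepant resolution violates terminality. Since Theorem \ref{rigid} already proves $\tilde\0$ is terminal, smoothness in codimension $3$ is immediate. For non-normality your step (4) is the right shape, but the parenthetical justification that $\bar\0\setminus\0\subseteq\mathrm{Sing}(\bar\0)$ is circular as written; this inclusion is a general fact about nilpotent orbit closures (if $\bar\0$ were smooth at $p\in\0'\subsetneq\0$, the KKS form would extend nondegenerately with the inclusion into $\g$ as moment map, forcing $G\cdot p$ to be open), after which the contradiction with $\codim(\bar\0\setminus\0)=2$ is exactly as you say.
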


Although the complete classification of $\0$ with normal closure
is unknown in $E_7$ and  $E_8$, E. Sommers communicated to the
author that the orbits in the corollary are known to have
non-normal closures.

\section{Induced orbits}

Recall (\cite{F1}, \cite{Fu}) that a  nilpotent orbit closure in a
simple Lie algebra admits a crepant resolution if and only if it
is a Richardson orbit but not in the following list: $A_4+A_1,
D_5(a_1)$ in $E_7$, $E_6(a_1)+A_1, E_7(a_3)$ in $E_8$.   On the
other hand, by \cite{Nam3}, if $\bar{\0}$ admits a crepant
resolution, then any $\qit$-factorial terminalizations of
$\bar{\0}$ is in fact a crepant resolution. Furthermore the
birational geometry between their crepant resolutions are
well-understood (\cite{Nam2}, \cite{Fu}). Thus to prove Conjecture
\ref{conj}, we will only consider induced orbits whose closure
does not admit any crepant resolution.

\begin{Thm}\label{thm}
Let $\0$ be an induced nilpotent orbit in a complex simple
exceptional Lie algebra $\g$. Assume that $\bar{\0}$ admits no
crepant resolution.  Then

(i) The variety $\tilde{\0}$ has $\qit$-factorial terminal
singularities for the following induced orbits: $A_2+A_1, A_4+A_1$
in $E_7$ and $A_4+A_1, A_4+2A_1$ in $E_8$.

(ii) If $\0$ is not in the list of (i), then any $\qit$-factorial
terminalization of $\tilde{\0}$ is given by a generalized Springer
map. Two $\qit$-factorial terminalizations of $\tilde{\0}$ are
connected by Mukai flops of type $E_{6,I}^I$ or $E_{6, I}^{II}$
(defined in section \ref{Mukai}).
\end{Thm}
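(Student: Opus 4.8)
The plan is to split the proof into the two parts of the statement, handling the terminal/$\qit$-factorial claim of (i) first and then the structural/birational claim of (ii). For part (i), the $\qit$-factoriality of $\tilde\0$ for the four orbits $A_2+A_1, A_4+A_1$ in $E_7$ and $A_4+A_1, A_4+2A_1$ in $E_8$ is already established in Proposition~\ref{Fact} (and the accompanying Lemma computing Jacobson--Morozov exceptional divisors). It remains to prove terminality. Since $\tilde\0$ carries a symplectic form on its smooth locus, a standard criterion (Namikawa) gives terminality once $\codim_{\bar\0}(\bar\0\setminus\0)\ge 4$, so I would first check the boundary codimensions from the tables in \cite{McG}; for any of the four orbits where a codimension-$2$ boundary orbit appears, I would instead argue directly on the Jacobson--Morozov resolution $\mu: Z=G\times^P\n_2\to\bar\0$ exactly as in the proof of Theorem~\ref{rigid}: compute $K_Z=p^*(K_{G/P}\otimes\det(G\times^P\n_2^*))$ via {\bf GAP4}, express it as $\sum_j a_jE_j$ over the $\mu$-exceptional divisors (which are of pure codimension~$1$ by $\qit$-factoriality), and show each $a_j>0$, either by checking that $K_{G/P}^{-1}\otimes\det(G\times^P\n_2)$ is ample (equivalently the associated character $\wedge^{\mathrm{top}}\g_{-1}$ is a strictly dominant combination of the marked fundamental weights) or, failing that, by exhibiting for a generic $x$ in each $E_j$ a nonzero element of $\Kr_x=\{u\in\g_{-1}\mid[x,u]\in\n_2\}$ using the Pantev--Brion $2$-form argument. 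Discreteness of the count of $\mu$-exceptional divisors is guaranteed since these same four orbits were explicitly treated in the Lemma after Proposition~\ref{Fact}.

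For part (ii), the argument has two halves: the birational geometry and the rigidity of the terminalizing maps. For the birational geometry, I would follow the strategy of \cite{Fu} (and the sketch in the Introduction): first, for each remaining induced orbit $\0$ with no crepant resolution, identify at least one generalized Springer map $\pi:G\times^Q(n(\q)+\bar\0_t)\to\bar\0$ that is a $\qit$-factorial terminalization --- by Proposition~\ref{key} a \emph{birational} generalized Springer map exists except for $A_2+A_1, A_4+A_1$ in $E_7$ and $A_4+A_1, A_4+2A_1$ in $E_8$, which are precisely the orbits excluded in (i); and one checks using Proposition~\ref{relevance} (applicable since $Z$ is smooth in codimension $\ge 3$ along $G\times^Q(n(\q)+\0_t)$, hence rationally smooth there) that the pre-image of every codimension-$2$ boundary orbit is divisorial, so that $\qit$-factoriality holds via Lemma~\ref{Q-fac}-type reasoning combined with $\Pic(\0_t)\otimes\qit=0$ and $b_2(G/Q)$ equal to the number of exceptional divisors. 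Next, one enumerates all generalized Springer maps onto each such $\bar\0$ (using the degree formula of Proposition~\ref{degree} to single out the birational ones) and verifies that consecutive ones are related by codimension-$2$ modifications which, after local analysis of the fibres over the relevant boundary orbit, are Mukai flops of the two new types $E_{6,I}^I$, $E_{6,I}^{II}$ defined in section~\ref{Mukai}. This last local identification --- pinning down precisely which flop occurs over which boundary stratum and checking it is one of the two $E_6$-types rather than a classical Mukai flop --- is where the $E_6$-specific phenomena enter and is, I expect, the main obstacle; it is the analogue of the case analysis in \cite{Fu} but with the new flop types, and will require the explicit structure of the relevant Levi subalgebras isomorphic to (subalgebras involving) $E_6$.

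Finally, to show that \emph{every} $\qit$-factorial terminalization of $\tilde\0$ --- not just those arising as generalized Springer maps --- is in fact a generalized Springer map, I would import the argument of \cite{Nam}: given an arbitrary $\qit$-factorial terminalization $Y\to\tilde\0$, one uses the $\cit^*$-action and the fact that $\tilde\0$ has a symplectic form on its smooth part to conclude $Y$ is a symplectic variety with a $\cit^*$-action contracting to a point; then one runs a (relative) minimal model/flop argument to connect $Y$ to one of the known generalized-Springer terminalizations through a chain of flops (necessarily of type $E_{6,I}^I$ or $E_{6,I}^{II}$ by the birational-geometry half, together with the classical Mukai flops already understood), and since the property of being a generalized Springer map is preserved under such flops (this is the content of Corollary~\ref{bir}), $Y$ must itself be one. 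Combining the two halves gives (ii), and together with part (i) this completes the proof of Theorem~\ref{thm}, and hence of Conjecture~\ref{conj} in the exceptional case. As an aside, the classification of $\0$ with terminal $\tilde\0$ (Proposition~\ref{terminal}) then drops out by recording, for each induced orbit not in list (i), whether the generalized Springer terminalization is an isomorphism (equivalently whether $\bar\0\setminus\0$ has codimension $\ge 4$ \emph{and} $\tilde\0$ is already $\qit$-factorial terminal), combined with the rigid case from Theorem~\ref{rigid}.
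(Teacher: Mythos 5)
Your proposal follows essentially the same route as the paper: part (i) is exactly Proposition~\ref{Fact} plus the check $\codim(\bar\0\setminus\0)\ge 4$ from the tables in \cite{McG} (the paper verifies this holds for all four orbits, so your Jacobson--Morozov fallback is never needed), and part (ii) proceeds by producing a birational generalized Springer terminalization via Proposition~\ref{key}, classifying the inducing data (the paper isolates this as Lemma~\ref{unique}: the pair $(l(\q),\0_t)$ is unique, so terminalizations differ only by the choice of parabolic with fixed Levi, whence the two new $E_6$-type flops), and then importing Namikawa's argument to handle arbitrary terminalizations. The only quibble is your parenthetical attributing ``the property of being a generalized Springer map is preserved under flops'' to Corollary~\ref{bir} --- that corollary only connects generalized-Springer terminalizations to each other; the preservation statement is part of the imported Namikawa argument (via the analogue of his Proposition (2.2.1)) --- but this does not change the substance of the approach.
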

\begin{Rque}
Unlike the classical case proved in \cite{Nam}, for an orbit $\0$
such that $\bar{\0}$ has no Springer resolution, the Mukai flops
of type $A-D-E_6$ defined in \cite{Nam2} (p. 91) do not appear
here (see Lemma \ref{bir}).
\end{Rque}

\subsection{Mukai flops}\label{Mukai}
Let $P$ be one of the maximal parabolics in $G:=E_6$ corresponding
to the following marked Dynkin diagrams:

\includegraphics*[width=12cm]{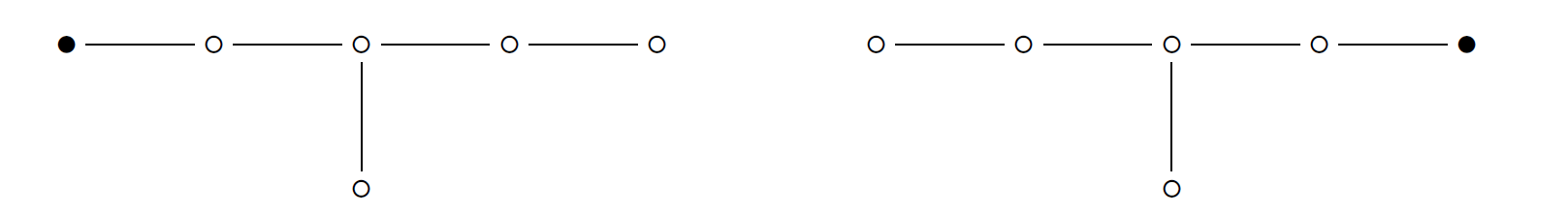}
%

 The Levi part of $P$
is isomorphic to $D_5$. We denote by $\0_I$ (resp. $\0_{II}$) the
nilpotent orbit in $l(\p)$ corresponding to the partition $2^21^6$
(resp. $32^21^3$). Then we have two generalized Springer maps
$\pi_I, \pi_{II}$ with image being the closures of orbits
$A_2+A_1, A_3+A_1$ respectively. As the component group
$A(\0_{A_2+A_1})=A(\0_{A_3+A_1})=\{1\}$, both maps are birational.
By \cite{Nam}, $\tilde{\0}_I, \tilde{\0}_{II}$ are
$\qit$-factorial terminal, thus $\pi_I, \pi_{II}$ give
$\qit$-factorial terminalizations.
\begin{Lem}
The two maps $\pi_I, \pi_{II}$ are small, i.e. the exceptional
locus has codimension at least 2.
\end{Lem}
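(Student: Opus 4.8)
The plan is to show that for each of the two maps $\pi_I, \pi_{II}: G \times^P (n(\p) + \bar{\0}_t) \to \bar{\0}$ (with $\0_t = \0_I$ or $\0_{II}$, $P$ a maximal parabolic with $b_2(G/P) = 1$), no $\pi$-exceptional divisor can appear. Since $\pi$ is birational and its image is normal (we work with $\tilde{\0}$), the exceptional locus is either empty, of pure codimension $1$, or of codimension $\geq 2$; so it suffices to rule out a codimension-$1$ exceptional component. Suppose for contradiction that $\pi$ has an irreducible exceptional divisor $E$. Then $E$ dominates some boundary orbit $\0_{x'} \subset \bar{\0} \setminus \0$, and $\pi$ restricted over $\0_{x'}$ has one-dimensional generic fibre contributing $\dim \0 - \dim \0_{x'} = 2 \cdot 1$, so $\0_{x'}$ is a codimension-$2$ orbit; in particular $\0_{x'}$ is $\pi$-relevant in the sense defined before Proposition \ref{relevance}. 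Because the total space $Z = G \times^P(n(\p) + \bar{\0}_t)$ is smooth along the open part $G \times^P(n(\p) + \0_t)$, one checks exactly as in the proof of Proposition \ref{Fact} that $Z$ is smooth in codimension $3$, hence rationally smooth along $\pi^{-1}(x')$, and Proposition \ref{relevance} applies: $\0_{x'}$ being $\pi$-relevant forces $\mtp(\rho_{(x',1)}, \Ind_{W(L)}^W \rho_{(t,1)}) \neq 0$.

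So the argument reduces to a finite check. First I would list, from the tables in \cite{McG} (section 5.7), the codimension-$2$ boundary orbits of $\bar{\0}_{A_2+A_1}$ and of $\bar{\0}_{A_3+A_1}$ in $E_6$. Then, using $\Ind_{W(L)}^W \rho_{(t,1)} = \Ind_{W_0}^W(\Ind_{W(L)}^{W_0}(\rho_{(t,1)}))$ with $W(L)$ of type $D_5$, I would compute the relevant induced representations via the Littlewood--Richardson rule inside $W(D_5)$ and then via Alvis's tables \cite{Alv} for the step $D_5 \hookrightarrow E_6$, and verify that for every codimension-$2$ boundary orbit $\0_{x'}$ the multiplicity $\mtp(\rho_{(x',1)}, \Ind_{W(D_5)}^{W(E_6)}\rho_{(t,1)})$ vanishes. (For $\0_I$, $\rho_{(t,1)}$ is the representation attached by the Springer correspondence to the $D_5$-orbit $2^21^6$; for $\0_{II}$, to $32^21^3$.) Each such vanishing contradicts the existence of $E$, so $\pi_I$ and $\pi_{II}$ are small.

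The main obstacle is purely bookkeeping: one must identify correctly the $W(D_5)$-representations $\rho_{(t,1)}$ and the $W(E_6)$-representations $\rho_{(x',1)}$ for the boundary orbits (matching Carter's and Alvis's labelings), and then read off the branching multiplicities from \cite{Alv} without sign or labeling errors — the same kind of computation already carried out repeatedly in Section \ref{prop} and in the proof of Proposition \ref{Fact}. An alternative, should the representation-theoretic route become cumbersome for one of the two orbits, is the geometric approach used later in the paper: take the Jacobson--Morozov (or another explicit) resolution factoring through $Z$, and exhibit with {\bf GAP4} that the preimage of each codimension-$2$ boundary orbit has codimension $\geq 2$ in $Z$ by computing $\dim [\p, x]$ for a generic $x$; but I expect the Alvis-table computation to be the cleanest and most self-contained, and it is the one consistent with the methods invoked elsewhere in this section.
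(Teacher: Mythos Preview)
Your proposal is correct and follows essentially the same approach as the paper. The only minor difference is that for $\pi_I$ the paper short-circuits the check: it observes directly that $\codim(\bar{\0}_{A_2+A_1}\setminus \0_{A_2+A_1})=4$, so there are no codimension-$2$ boundary orbits at all and semi-smallness alone gives the claim; your procedure would discover this vacuously when listing the codimension-$2$ orbits, but it is worth stating up front. For $\pi_{II}$ the paper does exactly what you propose: the only codimension-$2$ boundary orbit is $2A_2+A_1$, and one verifies $\mtp(\rho_{(2A_2+A_1,1)}, \Ind_{W(D_5)}^{W(E_6)}\rho_{(\0_{II},1)})=0$ using $\rho_{(2A_2+A_1,1)}=10_s$, $\rho_{(\0_{II},1)}=[-:2^21]$, and \cite{Alv} (p.~31).
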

\begin{proof}
 For $\pi_I$, we have $codim(\bar{\0}_{A_2+A_1}\setminus \0_{A_2+A_1})=4.$ As $\pi_I$
 is semi-small, this implies the claim. For $\pi_{II}$, the orbit
 closure $\bar{\0}_{A_3+A_1}$ is equal to $\0_{A_3+A_1} \cup \bar{\0}_{A_3} \cup
 \bar{\0}_{2A_2+A_1}$.  The codimension of $\bar{\0}_{A_3}$ in
 $\bar{\0}$ is 4, so its pre-image has codimension at least 2. The
 codimension of $\bar{\0}_{2A_2+A_1}$ in $\bar{\0}$ is 2. As one sees easily,
 $G \times^P (n(\p)+\bar{\0}_t)$ is smooth over points in  $\0_{2A_2+A_1}$.  By
 Proposition \ref{relevance}, we need to check
 $\mtp(\rho_{(2A_2+A_1, 1)}, \Ind_{W(D_5)}^W \rho_{(\0_{II},1)}) = 0.$
By \cite{Car}, we have $ \rho_{(2A_2+A_1, 1)}=10_s$ and
$\rho_{(\0_{II},1)} = [-:2^21]$. By \cite{Alv} (p. 31), we get the
claim.
\end{proof}

When $P$ changes from one parabolic to the other, we get two
$\qit$-factorial terminations of the same orbit. The birational
map between them is then a flop, which we will call {\em Mukai
flop of type $E_{6,I}^I$ and $E_{6,I}^{II}$} respectively.

\subsection{Proof of the theorem}
For an orbit $\0$ in  list (i) of the theorem, the variety
$\tilde{\0}$ is $\qit$-factorial by Proposition \ref{Fact}. One
checks using tables in Section 5.7  and 6.4 \cite{McG} that
$\codim(\bar{\0} \setminus \0) \geq 4$, thus $\tilde{\0}$ has only
terminal singularities. This proves claim (i) in the theorem.

 Let now $\0$ be an induced
orbit not in list (i). By Proposition \ref{key} we have a
birational generalized Springer map
$$\pi: G \times^Q(n(\q)+\bar{\0_t}) \to \bar{\0}.$$
For orbits listed in the proof of Proposition \ref{key}, we check
from the above and from Theorem \ref{rigid} that for our choice of
$\0_t$, the variety $\tilde{\0_t}$ is either $\qit$-factorial
terminal or it admits a $\qit$-factorial terminalization given by
a generalized Springer map. For orbits with $A(\0)=\{1\}$, it is
induced from a rigid orbit $\0_t$ and $\tilde{\0}_t$ is
$\qit$-factorial terminal by Theorem \ref{rigid}. This shows that
$\tilde{\0}$ admits a $\qit$-factorial terminalization given by a
generalized Springer map.

 The following
proposition is analogous to Proposition (2.2.1) in \cite{Nam}.
\begin{Prop}
Let $\0$ be a nilpotent orbit in a simple exceptional Lie algebra
such that $\bar{\0}$ does not admit a Springer resolution. Suppose
that a $\qit$-factorial terminalization of $\tilde{\0}$ is given
by the normalization of $G \times^Q(n(\q)+\bar{\0_t})$ for some
parabolic $Q$ and some nilpotent orbit $\0_t$ in $l(\q)$. Assume
that $b_2(G/Q)=1$ and the $\qit$-factorial terminalization is
small. Then this generalized Springer map is one of those in
Section \ref{Mukai}.
\end{Prop}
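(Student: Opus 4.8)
The plan is to exploit the extreme rigidity forced by the hypotheses $b_2(G/Q)=1$ and smallness, together with the classification already in hand. Since the terminalization is small and $b_2(G/Q)=1$, the variety $Z:=$ (normalization of $G\times^Q(n(\q)+\bar{\0}_t))$ has Picard rank one over $\tilde{\0}$, so it is a $\qit$-factorial terminalization with a single ``contractible direction''; by Lemma \ref{unique} the pair $(l(\q),\0_t)$ that yields such a terminalization via a \emph{birational} generalized Springer map is essentially unique, so it suffices to pin down $(l(\q),\0_t)$ from the numerical constraints. First I would observe that $\0_t\neq 0$ (as $\bar{\0}$ has no Springer resolution) and that, by the proof of Lemma \ref{unique}, $\tilde{\0}_t$ must itself be $\qit$-factorial terminal; combined with $b_2(G/Q)=1$ this means $Q$ is a \emph{maximal} parabolic. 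So the task reduces to: go through the maximal parabolics $Q$ of the exceptional gros and the $\qit$-factorial terminal orbits $\0_t$ in their Levi factors, and check which $(Q,\0_t)$ produce a \emph{small} generalized Springer map onto a closure with no Springer resolution.

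The key reduction is that smallness is very restrictive. If $l(\q)$ is of type $A$, then $\0_t$, being induced-free and ``full'', forces $\bar{\0}$ to admit a Springer resolution (as recalled at the start of Section~6 and used in Corollary~\ref{bir}), contradicting the hypothesis; so $l(\q)$ is not of type $A$. For the remaining maximal parabolics one uses the criterion of Proposition~\ref{relevance}: the map is small iff for every codimension-$2$ orbit $\0_{x'}$ in $\bar{\0}\setminus\0$ one has $\mtp(\rho_{(x',1)},\Ind_{W(L)}^W\rho_{(t,1)})=0$. Running this check case by case against the tables in \cite{Car} and the multiplicity data in \cite{Alv} — exactly as in the lemma establishing smallness of $\pi_I,\pi_{II}$ in Section~\ref{Mukai} — eliminates every candidate except the two with $l(\p)\cong D_5$ in $G=E_6$ and $\0_t$ equal to $\0_I$ (partition $2^21^6$) or $\0_{II}$ (partition $32^21^3$). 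Indeed in $E_7$ and $E_8$ the relevant Levi factors ($D_5$, $D_4+A_1$, $D_5+A_1$) either give even (Richardson, hence Springer-resolvable) orbits, or give generalized Springer maps of degree $2$ (non-birational, hence not terminalizations), or — when birational — give maps whose exceptional locus is divisorial because some codimension-$2$ boundary orbit is $\pi$-relevant, contradicting smallness; this is precisely the content of the case analysis in Corollary~\ref{bir}, which I would quote. Thus the only surviving possibilities are the two Mukai models of Section~\ref{Mukai}.

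Finally I would assemble these observations: maximality of $Q$ from $b_2(G/Q)=1$ plus $\qit$-factorial terminality of $\tilde{\0}_t$; exclusion of type-$A$ Levi from the Springer-resolution hypothesis; exclusion of all other maximal parabolics in $E_7,E_8$ by the degree/relevance computations; and in $E_6$, the only maximal parabolic with Levi $D_5$ carrying a nonzero $\qit$-factorial terminal $\0_t$ giving a small birational map is the one defining $\pi_I$ or $\pi_{II}$. Hence the generalized Springer map is one of those in Section~\ref{Mukai}. The main obstacle I anticipate is the bookkeeping in the relevance/multiplicity step: one must be sure that \emph{every} codimension-$2$ orbit in each boundary has been accounted for (the $\pi$-relevance criterion requires $Z$ to be rationally smooth, equivalently smooth in codimension $3$, along the fiber — which holds because $Z$ is smooth along $G\times^Q(n(\q)+\0_t)$ and the normalization only affects higher codimension), and that the $\Ind_{W(L)}^W$ decompositions are read off correctly from \cite{Alv} via the two-step induction $\Ind_{W(L)}^W=\Ind_{W_0}^W\circ\Ind_{W(L)}^{W_0}$. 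Everything else is a finite check against the classification tables.
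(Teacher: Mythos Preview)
Your overall strategy---reduce to maximal $Q$, exclude type-$A$ Levi, then rule out every remaining maximal Levi by a relevance/multiplicity check---could in principle succeed, but the paper's proof is dramatically shorter and you have missed the key observation that makes it so. The paper argues as follows: if $\0$ is neither $A_2+A_1$ nor $A_3+A_1$ in $E_6$, then by Proposition~\ref{Fact} the normalization $\tilde{\0}$ is already $\qit$-factorial (indeed, among all orbits with no Springer resolution, these two are the only ones for which $\tilde{\0}$ fails to be $\qit$-factorial). But a $\qit$-factorial variety admits no nontrivial small projective birational morphism onto it, so any $\qit$-factorial terminalization of $\tilde{\0}$ must be \emph{divisorial}. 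This contradicts the smallness hypothesis and forces $\0\in\{A_2+A_1,\,A_3+A_1\}$ in $E_6$; then Lemma~\ref{unique} (uniqueness of $(l(\q),\0_t)$) identifies the map with $\pi_I$ or $\pi_{II}$. No case-by-case multiplicity computation is needed at all.

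Your proposed route also has genuine gaps in execution. First, your list of non-type-$A$ maximal Levi factors is wrong: you cite ``$D_5$, $D_4+A_1$, $D_5+A_1$'' as the relevant Levi factors in $E_7$ and $E_8$, but these are the cases from Corollary~\ref{bir} where \emph{two conjugacy classes of parabolics share the same Levi}, not the list of maximal Levi subalgebras. The actual non-$A$ maximal Levi factors are $D_5$ in $E_6$; $D_6$, $E_6$, $D_5\times A_1$ in $E_7$; and $D_7$, $E_7$, $E_6\times A_1$, $D_5\times A_2$ in $E_8$. Second, Corollary~\ref{bir} does not check smallness of the corresponding generalized Springer maps; it only analyzes which inductions give $\qit$-factorial terminalizations and how they are related by flops. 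So the sentence ``this is precisely the content of the case analysis in Corollary~\ref{bir}'' is incorrect, and you would still owe a separate relevance computation for each of the maximal Levi types above and each admissible $\0_t$---substantial work that the paper's $\qit$-factoriality shortcut avoids entirely.
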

\begin{proof}
Assume that $\0$ is neither the orbit $A_2+A_1$ nor $A_3+A_1$ in
$E_6$. As we only consider $\0$ such that $\bar{\0}$ has no
Springer resolutions, by Proposition \ref{Fact}, $\tilde{\0}$ is
$\qit$-factorial. This implies that every $\qit$-factorial
terminalization of $\tilde{\0}$ is divisorial, which concludes the
proof.
\end{proof}

Now one can argue as in the proof of Theorem (2.2.2) in \cite{Nam}
to show that every $\qit$-factorial terminalization of $\0$ not in
(i) is actually given by a generalized Springer map and two such terminalizations
are connected by Mukai flops.  Then the following lemma  concludes the proof of the theorem.
\begin{Lem}\label{bir}
For an orbit $\0$ in the theorem but not in  the list (i), any two
$\qit$-factorial terminalizations of $\tilde{\0}$ given by
generalized Springer maps are connected by Mukai flops of type
$E_{6,I}^I$ or $E_{6, I}^{II}$.
\end{Lem}
\begin{proof}
Consider a $\qit$-factorial terminalization  given by the
generalized Springer map associated to $(Q, \0_t)$ with $\0_t \neq
0$. Note that if $l(\q)$ is of type $A$, then $\bar{\0}$ admits a
Springer resolution, which contradicts our assumption. This allows
us to consider only the following situations (for the other cases,
there exists a unique conjugacy class of parabolic subgroups with
Levi part being $l(\q)$): i) $l(\q)$ is $D_5$ in $E_n, n=6,7,8$.
ii) $l(\q)$ is either $D_4+A_1$ or $D_5+A_1$ in $E_8$.

Consider case i). In $E_6$, this is given by the definition of
Mukai flops. In $E_7$, the induction $(D_5, 32^21^3)$ gives two
$\qit$-factorial terminalization of the orbit closure
$\bar{\0}_{D_6(a_2)}$, which are connected by a Mukai flop of type
$E_{6, I}^{II}$. The induction $(D_5, 2^21^6)$ gives the even
orbit $A_4$. In $E_8$, the induction $(D_5, 32^21^3)$ gives two
$\qit$-factorial terminalization of the orbit closure
$\bar{\0}_{E_7(a_2)}$, which are connected by a Mukai flop of type
$E_{6, I}^{II}$, while the induction $(D_5, 2^21^6)$ gives the
even orbit $E_6(a_1)$.

Consider case ii). The induction $(D_4+A_1, 32^21+1^2)$ (resp.
$(D_4+A_1, 2^21^4+1^2)$) gives the even orbit $E_8(b_4)$ (resp.
$E_8(a_6)$). The induction $(D_5+A_1, 32^21^3 + 1^2)$ gives the
even orbit $D_7(a_1)$, while the induction $(D_5+A_1, 2^21^6 +
1^2)$ of $E_7(a_4)$ gives a generalized Springer map of degree 2.
\end{proof}

To conclude this paper, we give the following classification of
nilpotent orbits such that $\tilde{\0}$ has only terminal
singularities.
\begin{Prop}\label{terminal}
Let $\0$ be a nilpotent orbit in a simple complex exceptional Lie
algebra. Then the normalization $\tilde{\0}$ has only terminal
singularities if and only if $\0$ is one of the following orbits:

(1) rigid orbits;

(2) $2A_1, A_2+A_1, A_2+2A_1$ in $E_6$, $A_2+A_1$, $A_4+A_1$ in
$E_7$, $A_4+A_1$, $A_4+2A_1$ in $E_8$.
\end{Prop}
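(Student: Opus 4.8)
The plan is to read off the classification from Theorems \ref{rigid} and \ref{thm}, Proposition \ref{Fact} and the material of Section \ref{Mukai}, using throughout the criterion already invoked in the proof of Theorem \ref{rigid}: a symplectic variety has terminal singularities exactly when its singular locus has codimension $\geq 4$, and for a normalized nilpotent orbit closure this is controlled by the generic transverse singularities of $\bar\0$ along its boundary orbits. The ``if'' direction is then essentially immediate: rigid orbits are $\qit$-factorial terminal by Theorem \ref{rigid}, while $A_2+A_1,A_4+A_1$ in $E_7$ and $A_4+A_1,A_4+2A_1$ in $E_8$ are $\qit$-factorial terminal by Theorem \ref{thm}(i); for $2A_1,A_2+A_1,A_2+2A_1$ in $E_6$ one checks from the tables of \cite{McG} (and, for $A_2+A_1$, from the computation in Section \ref{Mukai}) that $\codim(\bar\0\setminus\0)=4$, so $\tilde\0$ is terminal by the criterion above — equivalently, $2A_1$ and $A_2+2A_1$ have small resolutions by \cite{Nam2}, which are crepant with smooth (hence terminal) total space and exceptional locus of codimension $\geq 2$, so $\tilde\0$ is again terminal.

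For the ``only if'' direction, suppose $\tilde\0$ is terminal and $\0$ is not rigid, hence induced. If $\bar\0$ admits a crepant resolution, then by \cite{Nam3} every $\qit$-factorial terminalization of $\tilde\0$ is in fact a crepant resolution, which terminality forces to be small, so $\bar\0$ carries a small resolution; but the multiplicity computations in the proof of Proposition \ref{Fact} show that every Springer resolution occurring for an orbit whose closure has a crepant resolution has at least one exceptional divisor over a codimension-$2$ boundary orbit, except for $2A_1$ and $A_2+2A_1$ in $E_6$, which already lie in (2). If $\bar\0$ has no crepant resolution and $\0$ is not one of the four orbits of Theorem \ref{thm}(i), then by Theorem \ref{thm}(ii) any $\qit$-factorial terminalization $p\colon Y\to\tilde\0$ is a generalized Springer map with $\0_t\neq 0$, i.e. a nontrivial projective birational morphism with $Y$ $\qit$-factorial terminal; if $\tilde\0$ were $\qit$-factorial then $p$ would have to be divisorial (impossible, since $Y$ would then carry a crepant exceptional divisor, contradicting terminality of $\tilde\0$) or an isomorphism (impossible, as induced orbit closures are singular), so $\tilde\0$ is non-$\qit$-factorial and Proposition \ref{Fact} forces $\g=E_6$ with $\0\in\{A_2+A_1,A_3+A_1\}$. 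These two are settled by examining directly the generalized Springer terminalization of Section \ref{Mukai} and the generic singularity of $\bar\0$ along its codimension-$2$ boundary orbit $\0_{2A_2+A_1}$: $A_2+A_1$ has $\codim(\bar\0\setminus\0)=4$ and belongs to (2), while $A_3+A_1$ is seen not to be terminal, which finishes the list.

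The only real difficulty is this last $E_6$ bookkeeping. Away from $E_6$ the statement is formal: Proposition \ref{Fact} makes $\tilde\0$ $\qit$-factorial, so any nontrivial $\qit$-factorial terminalization — one exists by Theorem \ref{thm}(ii) as soon as $\bar\0$ has no crepant resolution and $\0$ is not among the four exceptional orbits — is automatically divisorial and crepant and hence incompatible with terminality, and a Springer resolution of a crepant-resolvable orbit is divisorial for the same reason. Inside $E_6$ there are nine non-$\qit$-factorial orbits, and for each one must decide whether the $\qit$-factorial terminalization furnished by Proposition \ref{Fact} and Section \ref{Mukai} is small (so $\tilde\0$ is terminal) or divisorial (so it is not), equivalently whether $\codim\,\mathrm{Sing}(\tilde\0)\geq 4$; this rests on the multiplicity computations via \cite{Alv} already recorded in Proposition \ref{Fact} together with the small-resolution classification of \cite{Nam2}.
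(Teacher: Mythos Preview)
Your overall strategy coincides with the paper's: the ``if'' direction comes from Theorem~\ref{rigid}, Theorem~\ref{thm}(i), and a codimension check for the three $E_6$ orbits; for ``only if'' you observe that any $\0$ not in the list admits a nontrivial $\qit$-factorial terminalization by a generalized Springer map (or a Springer resolution), which is incompatible with terminality once $\tilde\0$ is $\qit$-factorial, and Proposition~\ref{Fact} then reduces everything to the handful of non-$\qit$-factorial orbits in $E_6$, five of which have symplectic resolutions and are dispatched immediately.

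The genuine gap is the orbit $A_3+A_1$ in $E_6$. You assert that it ``is seen not to be terminal'' by inspecting the terminalization of Section~\ref{Mukai} and the generic transverse slice along $\0_{2A_2+A_1}$, but you neither identify that singularity nor show it survives normalization, so nothing is proved. In fact the map $\pi_{II}$ of Section~\ref{Mukai} is shown there to be \emph{small}, so it exhibits no crepant exceptional divisor and does not by itself detect non-terminality; this case needs an independent argument. The paper gives one: it passes to the Jacobson--Morozov resolution $\mu\colon Z=G\times^P\n_2\to\bar\0$ (here $b_2(G/P)=3$), produces explicit elements $x_1,x_2\in\n_2$ whose $P$-orbit closures give two distinct $\mu$-exceptional divisors $E_1,E_2$, argues there is no third since $\tilde\0$ is non-$\qit$-factorial, and then computes in {\bf GAP4} that $\Kr_{x_1}=\{u\in\g_{-1}:[x_1,u]\in\n_2\}=\{0\}$ while $\Kr_{x_2}\neq\{0\}$. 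Via the description of $K_Z$ as the vanishing divisor of $\wedge^{\mathrm{top}}\Omega$ used in the proof of Theorem~\ref{rigid}, this gives $K_Z=aE_2$ with $a>0$; hence $E_1$ has discrepancy $0$ and $\tilde\0$ is not terminal. Your sketch contains no substitute for this computation.

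A small correction: your reason that the generalized Springer terminalization cannot be an isomorphism (``as induced orbit closures are singular'') is not valid, since $\qit$-factorial terminal varieties may well be singular. The correct point is that such a map with $Q\subsetneq G$ has fibre $G/Q$ over $0\in\bar\0$, hence is never an isomorphism.
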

\begin{proof}
By using tables in Section 5.7 and 6.4 of \cite{McG}, we get that
for the three orbits in $E_6$ of (2), we have $\codim(\bar{\0}
\setminus \0) \geq 4$, thus $\tilde{\0}$ has only terminal
singularities. By Theorem \ref{rigid} and Theorem \ref{thm}, this
implies that the variety $\tilde{\0}$ has only terminal
singularities for orbits in (1) and (2).

Assume now $\0$ is not in the list, then by Theorem \ref{thm},
$\tilde{\0}$ admits a $\qit$-factorial terminalization given by a
generalized Springer map. Thus if $\tilde{\0}$ is
$\qit$-factorial, then $\tilde{\0}$ is not terminal. By
Proposition \ref{Fact}, we may assume that $\0$ is one of the
following orbits in $E_6$: $A_3, A_3+A_1, A_4, A_4+A_1, D_5(a_1)$
and $D_5$. As for these orbits except $A_3+A_1$, the closure
$\bar{\0}$ admits a crepant resolution,  thus $\tilde{\0}$ is not
terminal.

Now consider the orbit $\0:=A_3+A_1$ in $E_6$. We will use the
method in the proof of Theorem \ref{rigid} to show that
$\tilde{\0}$ is not terminal. Consider the Jacobson-Morozov
resolution $Z:=G \times^P \n_2 \xrightarrow{\mu} \bar{\0}$, where
$P$ is given by marking the nodes $\alpha_2, \alpha_3, \alpha_5$.
We have $\bar{\0} \setminus \0 = \bar{\0}_{A_3} \cup
\bar{\0}_{2A_2+A_1}$. We consider the following two elements in
$\n_2$: $x_1:=\beta_{17}+\beta_{15}+\beta_{20}$ and
$x_2:=\beta_{17}+\beta_{18}+\beta_{20}+\beta_{21}+\beta_{24}$. We
define $E_i:=G \times^P \overline{P \cdot x_i}, i=1,2,$ which are
irreducible divisors by a calculus in {\bf GAP4}. We have
furthermore $\mu(E_1)=\bar{\0}_{A_3}$ and $\mu(E_2)=
\bar{\0}_{2A_2+A_1}$, thus the two divisors are distinct. As
$b_2(G/P)=3$ and $\tilde{\0}$ is non-$\qit$-factorial, $E_1$ and $
E_2$ are the only two $\mu$-exceptional irreducible divisors.
Using a calculus in {\bf GAP4}, we can show that $\Kr_{x_1}:=\{u
\in \g_{-1} | [x_1, u] \in \n_2\}$ is reduced to $\{0\}$ and
$\Kr_{x_2} \neq \{0\}$. This implies that $K_Z = a E_2$ for some
$a>0$, which proves that $\tilde{\0}$ is not terminal.
\end{proof}
\begin{Rque}
For classical simple Lie algebras, it is proven in \cite{Nam}(Prop. 1.3.2, the proof works  also for type $A$) that
$\tilde{\0}_{d}$ has terminal singularities if and only if the partition $d$ has full members, i.e.
there exists $k$ such that $d=(1^{d_1}2^{d_2}\cdots k^{d_k}$ with $d_i \geq 1$ for all $i=1, \cdots, k$.
\end{Rque}

\section{Appendix: $\qit$-factoriality in classical cases}

In Section \ref{qf}, we have classified nilpotent orbits $\0$ in a simple exceptional Lie algebra $\g$
such that $\tilde{\0}$ is $\qit$-factorial. In the case when $\g$ is of classical
type, it was claimed in \cite{F1} that $\tilde{\0}$ is always $\qit$-factorial, which is not true.
In fact, the proof of Theorem 2.6 in \cite{F1} is not correct.
The aim of this appendix is to complete the classification of
$\0$ such that $\tilde{\0}$ is $\qit$-factorial.

Consider first the case $\g=\mathfrak{sl}_n$ and $\0$ is given by a partition
$d=[d_1, \cdots, d_k]$ of $n$. Let $q_i:=\# \{j|d_j \geq i\}$ be the dual partition. Let $Q \subset SL_n$ be a parabolic subgroup with flag type
$(q_1, \cdots, q_{d_1})$. There exists a Springer map $\mu: T^*(SL_n/Q) \to \bar{\0}$.  Note that
$b_2(G/Q) =d_1-1$.

As the map $\mu$ is semi-small, the codimension one components of $\Exc(\mu)$ come
from those in $\mu^{-1}(\0')$, where $\0' \subset \bar{\0}$ is an orbit of codimension 2.
A codimension 2 orbit $\0'$ in $\bar{\0}$ corresponds to some $i$ such that $d_i - d_{i+1} \geq 2$ and the partition
is given by $d'=[d_1, \cdots, d_{i-1}, d_i-1, d_{i+1}+1, d_{i+2}, \cdots]$. By \cite{KP1}, the singularity of this minimal degeneration is of surface type $A_{d_i-d_{i+1}-1}$. In particular,  the number of
irreducible components of codimension 1 in $\mu^{-1}(\0')$ is at most $d_i-d_{i+1}-1$.  Thus the total number
of irreducible components in $\Exc(\mu)$ is at most $\sum_{i|d_i-d_{i+1} \geq 2} (d_i-d_{i+1}-1) = \sum_{i|d_i-d_{i+1} \geq 1} (d_i-d_{i+1}-1) = d_1 - k$, where $k$ is the number of distinct $d_i$ in $d$, which is strictly smaller
than $b_2(G/Q)$ if $k>1$. By Lemma \ref{Q-fac}, the variety $\tilde{\0}$ is not $\qit$-factorial. Note that when $k=1$, i.e. $d=(a^k)$, we have $Pic(\0) \simeq \zit/a \zit$, which is finite, so
$\tilde{\0}$ is $\qit$-factorial. This proves the following proposition.
\begin{Prop}\label{sl}
For a nilpotent orbit $\0_d$ in $\mathfrak{sl}_n$, the variety $\tilde{\0}_d$ is $\qit$-factorial if and only if
$d=(a^k)$ for some integer $a, k$ such that $ak=n$.
\end{Prop}

For a partition $d$, we put $r_i = \# \{j | d_j = i\}$. Let $G$ be a simply-connected
Lie group with Lie algebra $\g$ and $G^\phi$ the subgroup as defined in \cite{CM}(Section 3.7).
We have $\chi(G_x) = \chi(G^\phi)$ (Lemma 2.2 \cite{F1}), where $G_x$ is the stabilizer of $x \in \0_d$.  Note that
$Pic(\0_d) = \chi(G^\phi)$ since $Pic(G)$ is trivial. Thus if $\chi(G^\phi)$ is finite, then $\tilde{\0}$ is $\qit$-factorial.

For any group $H$, let $H^n_\Delta$ be the diagonal copy of $H$  in the product $H^n$. For $h \in H$, we
denote by $(h)^n_\Delta$ the image of $h$ in $H^n_\Delta$.
If $H_1, \cdots, H_m$ are matrix groups, we denote by $S(\prod_i H_i)$ the subgroup of $\prod_i H_i$
consisting of $m$-tuples of matrices whose determinants have product 1.
Recall the following result(\cite{CM} theorem 6.1.3).
\begin{Prop}[Springer-Steinberg]
\begin{equation*} G^\phi = \begin{cases}
 \prod_{i \, odd} (Sp_{r_i})^i_\Delta \times \prod_{i\,even}(O_{r_i})^i_\Delta   & \g =\mathfrak{sp}_{2n}; \\
 double\ cover \ of\  C:=S(\prod_{i \, even} (Sp_{r_i})^i_\Delta \times \prod_{i\,odd}(O_{r_i})^i_\Delta)  & \g = \mathfrak{so}_{m}.
\end{cases} \end{equation*}
\end{Prop}

The main result of this appendix is as follows, which generalizes Corollary (1.4.3) \cite{Nam}. 
\begin{Prop}\label{bcd}
Let $\g$ be a complex simple Lie algebra of type $BCD$ and $\0$ a nilpotent orbit in $\g$.
The variety $\tilde{\0}$ is $\qit$-factorial except when $\g=\mathfrak{so}_{4n+2}$ and the partition of $\0$ has a unique odd $d_i$ which has multiplicity 2.
\end{Prop}
\begin{proof}
When $\g=\mathfrak{sp}_{2n}$, we have $(G^\phi)^\circ =\prod_{i \, odd} (Sp_{r_i})^i_\Delta \times \prod_{i\,even}(SO_{r_i})^i_\Delta.$ Note that although $SO_2\simeq \cit^*$, the restriction
$\chi (O_2) \to \chi(SO_2)$ is trivial. This implies that $\chi(G^\phi) \to \chi((G^\phi)^\circ)$
 is trivial, which gives $\chi(G^\phi) = \chi(G^\phi/(G^\phi)^\circ)
=\chi(\pi_1(\0))$. Thus $Pic(\0)=\chi(\pi_1(\0))$ and $\tilde{\0}$ is $\qit$-factorial.

Let $\g$ be of type $B-D$. To show the $\qit$-factoriality of $\tilde{\0}$, it suffices
 to show that $\chi(C)$ is finite.
We may assume that there  exits an odd $i$ such that $r_i=2$, otherwise we are done.
 We have two cases:

{\em Case 1) there exist two different odd $i, j$ such that $r_i=2$ and $r_j>0$.}

 In this case, $\chi(C)=\chi(H)$, where $H:=S(\prod_{k\,odd}(O_{r_k})^k_\Delta)$. We have $H^\circ \simeq  \prod_{k\,odd} SO_{r_k}$. It suffices to show that the restriction $\chi(H) \to \chi(H^\circ)$ is trivial. Let $\varphi\in \chi(H)$ be a character.  We define $f: \cit^* \to SO_2$ and $f_{-}: \cit^* \to O_2$ by the following:

$$f(b) = \begin{pmatrix} b & 0 \\0 & b^{-1}
\end{pmatrix},  f_{-}(b) = \begin{pmatrix} o & b \\b^{-1} & 0
\end{pmatrix}.$$

Take any element $g \in O_{r_j} \setminus SO_{r_j}$, then the element $f_{-}^g(b):=(f_{-}(b))^i_\Delta \times (g)^j_\Delta$ is in $H$ for any $b \in \cit^*$. Let $f^g(b):=(f(b))^i_{\Delta} \times (g)^j_\Delta$, then
$f^{g^2}(b)$ is in  $H$.
  Note that $f_{-}^g(1) \cdot f^{1}(b^{-1}) =f_{-}^g(b)$ and $f_{-}^g(b) f_{-}^g(c) = f^{g^2}(bc^{-1})= f^{g^2}(1) \cdot f^1(bc^{-1})$. This implies that $\varphi(f^{1}(bc^{-1})) =\varphi (f^{1}(b^{-1}) f^{1}(c^{-1}))$, which gives that $\varphi(f^{1}(b^2))=1$, for any $b \in \cit^*$. This shows that $\varphi|_{SO(2)} = 1$, and then $\varphi|_{H^0} = 1$.
In particular, this gives that $\tilde{\0}$ is $\qit$-factorial.

{\em Case 2) there exists a unique odd $i$ such that $r_i>0$.}

 In this case, we have $r_i=2$ and $\chi(C) = \chi(SO_2) \simeq \mathbb{Z}$. This implies that $\chi(G^\phi)$ is also isomorphic to $\mathbb{Z}$, hence $\tilde{\0}$ is not $\mathbb{Q}$-factorial.
\end{proof}

Now we can correct Theorem 2.6 of \cite{F1} as follows, whose proof is similar to {\em loc. cit.}, noting that only the case (i) can have infinite character group.
\begin{Prop}\label{p.Pic}
Let $\g$ be a simple complex Lie algebra  of B-C-D type. 

(i) if $\g=\mathfrak{so}_{4n+2}$ and the partition of $\0$ has a unique odd $d_i$ which has multiplicity 2, then 
$Pic(\0) \simeq \mathbb{Z}$;

(ii) We have $Pic(\0) = \X(\pi_1(\0))$ for other cases, where $\pi_1(\0)$ is the fundamental group of $\0$.
\end{Prop}

\end{document}